\newtheorem{theorem}{Theorem}[section]
\newtheorem{definition}[theorem]{Definition}
\newtheorem{example}[theorem]{Example}
\newtheorem{lemma}[theorem]{Lemma}
\newtheorem{proposition}[theorem]{Proposition}
\newtheorem{question}[theorem]{Question}
\newtheorem{remark}[theorem]{Remark}
\newenvironment{proof}[1][Proof]{\noindent\textbf{#1.} }
{\hfill \ \rule{0.5em}{0.5em}}
\newcommand{\coeff}[2]{( \lambda_{i_#1} - \lambda_{i_#2} ) } 
\newcommand{\coeffnp}[2]{ \lambda_{i_#1} - \lambda_{i_#2}  } 
\def\mike#1{\noindent
\textcolor{green}
{\textsc{(Mike:}
\textsf{#1})}}
\title{Hypergraphs of girth 5 and 6 and coding theory}
\author{Kathryn Haymaker\thanks{Department of Mathematics \& Statistics, Villanova University. \texttt{kathryn.haymaker@villanova.edu}} \and Michael Tait\thanks{Department of Mathematics \& Statistics, Villanova University. Research partially supported by NSF grant DMS-2245556. \texttt{michael.tait@villanova.edu}} \and Craig Timmons\thanks{Department of Mathematics and Statistics, California State University, Sacramento. \texttt{craig.timmons@csus.edu}}.
}
\begin{document}

\maketitle

\begin{abstract}
    In this paper, we study the maximum number of edges in an $N$-vertex $r$-uniform hypergraph with girth $g$ where $g \in \{5,6 \}$.  Writing 
$\textrm{ex}_r ( N, \mathcal{C}_{<g} )$ for this maximum, 
it is shown that $\textrm{ex}_r ( N , \mathcal{C}_{ < 5} ) = \Omega_r ( N^{3/2 - o(1)} )$ for $r \in \{4,5,6 \}$.  We address an unproved claim from \cite{TV} asserting a technique of Ruzsa can be used to show that this 
lower bound holds for all $r \geq 3$.  We carefully explain one of the main obstacles that was overlooked at the time the claim from \cite{TV} was made, and 
show that this obstacle can be overcome when $r\in \{4,5,6\}$.  We use constructions from coding theory to prove 
nontrivial lower bounds that hold for all $r \geq 3$. Finally, we use a recent result of Conlon, Fox, Sudakov, and Zhao to show that the sphere packing bound from coding theory may be improved when upper bounding the size of linear $q$-ary codes of distance $6$.
\end{abstract}

\section{Introduction}

Let $X$ be a finite set and $\binom{X}{r}$ be the collection of all subsets of $X$ with $r$ elements.
An \emph{$r$-uniform hypergraph} $\mathcal{H}$ with vertex set $X$ is a subset of $\binom{X}{r}$.  
Being rather general objects, many combinatorial problems can be phrased in terms of uniform hypergraphs.
Furthermore, there are important instances in which this perspective is useful.  Indeed, hypergraphs can provide a framework for addressing problems that at first seem unrelated, yet are linked by some underlying idea or concept.      

As early as 2000, researchers have used results and methods from hypergraph Tur\'{a}n theory to 
prove bounds on the sizes of codes.  An important instance of this is the work of Alon, Fischer, and Szegedy \cite{alonFischerSzegedy} from 2001.  They used tools from additive combinatorics and extremal hypergraph theory to study IPP codes.  
Let $V$ be a set with $n$ elements and $\mathcal{C} \subseteq V^4$.  We say that $\mathcal{C}$ has the \emph{identifiable parent property} (IPP) if  
\begin{enumerate}
    \item for all distinct $a,b,c \in \mathcal{C}$, there is a coordinate $i \in \{1,2,3,4 \}$ where $a_i$, $b_i$, and $c_i$ are all different, and 
    \item for all $a,b,c,d \in \mathcal{C}$ with $\{a,b \} \cap \{c , d \} = \emptyset$, there is a coordinate $j \in \{1,2,3,4 \}$ such that 
    $\{a_j , b_j \} \cap \{ c_j , d_j \} = \emptyset$.  
\end{enumerate}
Let $f(n)$ be the the maximum size of an IPP code $\mathcal{C} \subseteq V^4$.
Given $\epsilon > 0$, Alon, Fischer, and Szegedy proved that for all $n > n_0 ( \epsilon )$, 
\begin{equation}\label{eq:alon IPP bound}
    n^{2- \epsilon } < f(n) < \epsilon n^2 .
    \end{equation}
Their proof of the upper bound takes a code with IPP and defines a corresponding hypergraph.  
The graph removal lemma \cite{alonDuke, szemeredi} is then used to prove an upper bound on the number of edges in this hypergraph.  The proof of the lower bound defines a hypergraph 
based on an extension of Behrend's construction of sets with no 3-term arithmetic progression \cite{Ruzsa}.   
Alon \cite{alon} used a similar partite hypergraph construction to prove a theorem on property testing in graphs.  Again, one of the ingredients in the construction is Ruzsa's generalization of Behrend's construction.  Something notable here is that the application is on
property testing of graphs, an area which at first glance may have no obvious connection to coding theory.  

Still in the early 2000s, Lazebnik and Verstra\"{e}te \cite{LV} made a significant contribution to hypergraph Tur\'{a}n theory.  They determined an asymptotic formula for the maximum number of edges in an $N$-vertex 3-uniform hypergraph with girth five, defined in the Berge sense.  One of the elements in their proof is a hypergraph construction similar to the one used 
in \cite{alon, alonFischerSzegedy}, 
but now the generalized Behrend construction is replaced with a special type of Sidon set.   

Over the past 20 years, using tools from additive number theory to construct codes and hypergraphs has 
evolved in sophistication.  Many papers have used this method in various forms, such as a taking a sparse hypergraph with many edges and viewing it as a code inside some Hamming space.  One then uses properties of the hypergraph to deduce properties of the code.    

In 2020, several of these connections were made explicit by Shangguan and Tamo \cite{ShangguanTamo1} who constructed sparse hypergraphs to make progress on problems involving three different types of codes: (i) parent-identifying systems,
(ii) combinatorial batch codes, and (iii) locally recoverable codes.  
One condition that is often used to make a graph or hypergraph sparse is to forbid short cycles.  
Recall a {\em Berge cycle} of length $k$ in a hypergraph is a sequence of $k$ distinct vertices $v_1,\cdots, v_k$ and $k$ distinct edges $E_1,\cdots, E_k$ such that $\{v_i, v_{i+1}\} \subset E_i$, where indices are read modulo $k$.  A hypergraph $\mathcal{H}$ has girth at least $g$ if $\mathcal{H}$ has no Berge cycles of length $k$ for every $2 \leq k \leq g-1$. We will use the notation $\mathcal{C}_{<g}$ to denote the family of Berge cycles of length at most $g-1$. Hence, a hypergraph has girth at least $g$ if and only if it is $\mathcal{C}_{< g}$-free. Given a family of $r$-uniform hypergraphs $\mathcal{F}$, the {\em Tur\'an number} for $\mathcal{F}$ is the maximum number of edges in an $r$-uniform $N$-vertex hypergraph which is $\mathcal{F}$-free and is denoted by $\mathrm{ex}_r(N, \mathcal{F})$. 
Because we will use objects from both coding theory and graph theory in this paper, $n$ will denote the dimension of a vector space that a code lives in and $N$ will denote the number of vertices in a graph. An $[n,k,d]_q$ code is a linear subspace of $\mathbb{F}_q^n$ of dimension $k$ such that every nonzero vector has at least $d$ nonzero entries. 

With one motivation coming from coding theory, 
Shangguan and Tamo used a result on hypergraph 
independence numbers \cite{DLR} and the probabilistic method to construct sparse hypergraphs.  They noted that some of these hypergraphs imply lower bounds on the Tur\'{a}n number of $\mathcal{C}_{ < g}$-free graphs.  
Indeed, one of the corollaries to the main theorem in \cite{ShangguanTamo1} is that for all $r \geq 3$ and $g \geq 5$, 
$\mathrm{ex}_r ( N , \mathcal{C}_{ < g} )  = \Omega_r ( N^{\frac{g-1}{g-2} } ( \log N )^{ \frac{1}{ g - 2} })$. 
This gave a logarithmic factor improvement to the bound 
obtained by Xing and Yuan who also connected $\mathcal{C}_{ <g }$-free $r$-uniform hypergraphs with locally recoverable codes \cite{XingYuan}.
They proved that a construction giving a lower bound 
on $\textrm{ex}_r( N , \mathcal{C}_{ < 5} )$ implies a lower bound on the length of a locally recoverable code (LRC) with minimum distance $d$ and locality $r$, where $d \in \{ 9 , 10 \}$ and $r \geq d - 2$.  There is a large amount of research on LRCs, which adds to the motivation for continued study of the $\mathcal{C}_{ < g}$-free $r$-uniform hypergraphs.  As mentioned earlier, the case $r = 3$, $g = 5$ was asymptotically solved by 
Lazebnik and Verstra\"{e}te \cite{LV}, a paper which also 
posed an important theoretical question on generalized Sidon sets.  The case of $r = 3$ and odd $g \geq 5$ was shown to be related to multiplicative square-free sequences by Pach and Vizer \cite{PachVizer}. Specifically, for $k \geq 2$, let $F_k (N)$ be the maximum size of a set $A \subset \{1,2, \dots , N \}$ such that no product of $k$ distinct integers from $A$ is a square.  Theorem 11 \cite{PachVizer}  
shows that a lower bound on $\textrm{ex}_{3}( N , \mathcal{C}_{<5} )$ implies a lower bound on 
$F_{8}(N)$. Gerbner and Patk\'{o}s \cite{GerbnerPatkos} proved that $N$-vertex $(a+b)$-uniform $\mathcal{C}_{ < 5}$-free hypergraphs can be used to prove lower bounds on generalized Tur\'{a}n numbers involving $K_{a,b}$.
Let $\textrm{ex}(N, H,F)$ be the maximum number of copies of $H$ in an $N$-vertex $F$-free graph.
Proposition 4.1 of \cite{GerbnerPatkos} shows that $\textrm{ex}(N, K_{a,b} , K_{2,t}) 
\geq \textrm{ex}_{a+b}(N, \mathcal{C}_{< 5} )$ 
for $2 \leq a \leq b < t$.  Therefore, lower bounds on the girth 5 Tur\'{a}n problem have direct applications to problems in extremal combinatorics.  
Circling back to coding theory, Xing and Yuan \cite{XingYuan2} proved an 
equivalence between certain types of $\mathcal{C}_{ <g}$-free uniform hypergraphs and LRCs.  Their equivalence reflects the fact that the more edges in the hypergraph often results in a code with better parameters. 

Given that several published results use the best available lower   
bounds on Tur\'{a}n numbers of $r$-uniform $\mathcal{C}_{ < 5}$-free hypergraphs, we now focus the discussion on the best proven bounds that we are aware of.
As noted above, Lazebnik and Verstra\"ete gave the asymptotic formula $\mathrm{ex}_r(N, \mathcal{C}_{<5}) \sim \frac{1}{6}N^{3/2}$. For $r>3$ the situation is less clear. 
The third author and Verstra\"{e}te \cite{TV} claimed that 
one could adapt a method of Ruzsa \cite{Ruzsa} to show that $\mathrm{ex}_r(N, \mathcal{C}_{<5}) = N^{3/2-o(1)}$ for all $r$.   However, despite that almost 10 years have passed since \cite{TV} appeared, we currently do not have a proof.  Several papers have repeated this claim \cite{TimmonsThesis, Spiro3, Spiro1, Spiro2} or make use of it \cite{GerbnerPatkos, PachVizer, ShangguanTamo1, XingYuan}.  
One of the contributions of this paper is to highlight and identify an obstacle that 
was not foreseen, and then prove that we may use Ruzsa's method and overcome it when $r \in \{4,5,6 \}$.  
We do not know how to overcome this obstacle for larger $r$.  We describe at a high level what the significant gap is at the beginning of Section \ref{sec: ruzsa method} and exactly which detail of the claimed proof does not go through in Remark \ref{remark: subtlety}.    
For all $r>3$, 
the best-known lower bound is from the logarithmic improvement to the probabilistic method \cite{ShangguanTamo1}, which gives $\mathrm{ex}_r(N, \mathcal{C}_{<5}) = \Omega \left(N^{4/3}(\log N)^{1/3}\right)$ and $\mathrm{ex}_r(N, \mathcal{C}_{<6}) = \Omega \left(N^{5/4} (\log N)^{1/4}\right)$. The upper bound for both functions is $O_r(N^{3/2})$ \cite{GL}. 

For larger $r \geq 7$, we provide a different construction which utilizes objects from coding theory and improves the current lower bounds on $\mathrm{ex}_r(N, \mathcal{C}_{<5})$ and $\mathrm{ex}_r(N, \mathcal{C}_{<6})$.



\subsection{Our results}

In the papers mentioned above, number theoretic properties are used to produce hypergraphs that obey certain properties. In this paper we similarly construct hypergraphs where subgraphs can be described by solutions to given equations, which we then avoid to obtain a hypergraph with a given girth. The papers described above either use this type of construction to study Tur\'an problems directly or use this type of construction to apply graph theoretic methods on coding theory problems. Our main results are on Tur\'an problems, but go the other direction from the early papers. That is, we use results from coding theory to give improvements on Tur\'an problems rather than the other way. At the end of the paper we go back in the more well-studied direction and show that recent hypergraph Tur\'an results imply an improvement on the sphere-packing bound for linear codes of distance $6$. Our contributions are as follows:

\begin{itemize}
\item We address the unproved claim 
of the bound lower bound $N^{3/2-o(1)}$ for $\mathcal{C}_{<5}$-free graphs of uniformity larger than 3.  We explain why the claim was erroneously made by going through Ruzsa's method carefully, and noting that such a claim relies on a solution to a finite field problem that is to our knowledge open. This problem is quite interesting in its own right (see Question \ref{behrend over field question} in the Conclusion).
\item We show that when $r \in \{4,5,6 \}$, this obstacle can be overcome in a different way than answering Question \ref{behrend over field question} affirmatively (Theorem \ref{thm: 456}).    
\item For all other $r$, we use constructions from coding theory to give lower bounds on $\mathrm{ex}_r(N, \mathcal{C}_{<5})$ and $\mathrm{ex}_r(N, \mathcal{C}_{<6})$ that improve the probabilistic bound (Theorem \ref{thm: graphs from codes}).
\item We show that recent results of Conlon, Fox, Sudakov, and Zhao \cite{conlon} improve the sphere packing bound on the specific case of linear codes of distance $6$ for any odd prime power $q\geq 7$ (Theorem \ref{distance 6 code upper bound}). 
\end{itemize}

\begin{theorem}\label{thm: 456}
    Let $r\in \{4,5,6\}$. Then we have 
    \[
    \mathrm{ex}_r(N, \mathcal{C}_{<5})  = \Omega_r\left(N^{3/2-o(1)}\right).
    \]
\end{theorem}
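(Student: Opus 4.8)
The plan is to adapt Ruzsa's construction of large Sidon-type sets via Behrend's method, applied to carefully chosen algebraic objects, and to assemble from these a dense $r$-uniform hypergraph of girth $5$. The starting point is the observation, going back to the $r=3$ case of Lazebnik--Verstra\"ete, that girth-$5$ hypergraphs are obtained from point sets in which no ``short linear configuration'' occurs: one wants a set $A$ in an abelian group $G$ such that every Berge $2$-cycle and Berge $3$-cycle among the $r$-sets built from $A$ corresponds to a solution of a forbidden equation, and then to take $A$ to avoid those solutions. Concretely, I would first set up the dictionary translating a Berge $4$-cycle-free (i.e.\ girth-$5$) $r$-uniform hypergraph into a combinatorial condition on a set of $r$-tuples: the $r$-sets should be slices of the graph of a suitable polynomial or linear map, so that two $r$-sets meeting in $\geq 2$ points, or a Berge triangle, forces an additive equation with few variables to have a nontrivial solution.

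Second, I would recall Ruzsa's technique: Behrend's construction gives, for each dimension parameter, a subset of a box $\{0,1,\dots,m-1\}^t \subset \mathbb{Z}^t$ lying on a sphere, hence containing no three collinear points, of size $m^{t}\cdot e^{-O(\sqrt{\log |G|})}$ relative to the ambient group; Ruzsa showed one can push this to avoid more general low-complexity equations while losing only a subpolynomial factor. The key step — and the one where the obstacle identified in Section~\ref{sec: ruzsa method} lives — is that to run this for uniformity $r$ one needs a Behrend-type set inside a vector space over a \emph{finite field} $\mathbb{F}_q$ (not over $\mathbb{Z}$) that simultaneously avoids the family of equations arising from Berge $2$- and $3$-cycles; this is exactly the finite-field Behrend problem flagged as Question~\ref{behrend over field question}, which is open. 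The resolution for $r\in\{4,5,6\}$ should be: rather than demanding a single finite-field Behrend set, exploit that for these small values of $r$ the number and shape of the forbidden equations is limited enough that one can work over $\mathbb{Z}$ (or over a cyclic group of prime order, where Behrend's sphere argument applies verbatim) and still encode all the girth-$5$ constraints — the arithmetic of the hypergraph's short cycles closes up without needing the full field structure. I would carry this out by explicitly listing, for each $r\in\{4,5,6\}$, the $O(1)$ equations that a Berge $2$-cycle and a Berge $3$-cycle produce, checking they all have ``complexity'' low enough for Ruzsa's theorem to apply over $\mathbb{Z}_p$, and then verifying the integer/cyclic-group construction indeed yields an $N$-vertex $r$-graph with $N^{3/2-o(1)}$ edges.

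The third step is the quantitative bookkeeping: one chooses the prime $p$ and the dimension $t$ so that the ambient group has size comparable to the eventual vertex count $N$ (up to constants depending on $r$), the Behrend--Ruzsa set has density $e^{-O(\sqrt{\log N})} = N^{-o(1)}$, each element of the set contributes one edge, and the $r$-uniformity is arranged by taking each edge to be an $r$-element ``fiber'' — so the edge count is $(\text{set size}) = N^{3/2}\cdot N^{-o(1)}$ after the dust settles, while the vertex set remains of size $\Theta(N)$. Finally I would verify girth $5$ directly: suppose a Berge $2$- or $3$-cycle exists; trace it back through the dictionary to a nontrivial solution of one of the listed forbidden equations inside the Behrend--Ruzsa set; contradiction.

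I expect the main obstacle to be precisely the point where \cite{TV} went wrong: making sure that for $r\in\{4,5,6\}$ the short-cycle equations really can be handled over $\mathbb{Z}$ or $\mathbb{Z}_p$ without the finite-field Behrend input. The danger is a ``wrap-around'' or linear-dependence phenomenon — an equation that looks avoidable in $\mathbb{Z}$ but whose avoidance in the modular setting we actually need is not guaranteed, or conversely an equation among the $r$ coordinates of a single edge that only becomes trivial when $r$ is small. Getting the case analysis for $r=4,5,6$ complete and checking that it genuinely breaks for $r\geq 7$ (consistent with the paper's claim that the obstacle is not overcome there) is the delicate heart of the argument; the Behrend--Ruzsa density estimate and the girth verification are then routine.
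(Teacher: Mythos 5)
Your high-level framework is consistent with the paper's: build $\mathcal{H}(A,\vec{\lambda})$ from a Behrend--Ruzsa set, translate Berge cycles into forbidden equations, and do the edge-counting. However, two things in your proposal are off in ways that hide the actual content of the proof.

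First, you locate the difficulty in the equations produced by Berge $2$- and $3$-cycles. That is not where the problem is. Those cycles give genus-$1$ equations (Behrend type), which Ruzsa's method handles over $\mathbb{Z}$ (and hence over $\mathbb{Z}_p$ after a standard wrap-around argument, since the coefficients involved are differences of the fixed integers $\lambda_i$ and so are bounded independently of $p$). The obstruction lives entirely in the Berge $4$-cycle equations, which can be genus $2$. Over $R=\mathbb{F}_p\times\mathbb{F}_p$, a $4$-cycle gives a system of two equations in $x,y,u,v\in B$; following Ruzsa one eliminates a variable, but the elimination introduces $\sqrt{s}$ modulo $p$, where $s$ is the square-free part of the integer $\pi_{i_1,i_2,i_3,i_4}=(\lambda_{i_2}-\lambda_{i_1})(\lambda_{i_3}-\lambda_{i_2})(\lambda_{i_4}-\lambda_{i_3})(\lambda_{i_1}-\lambda_{i_4})$. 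As an integer, a square root of $s$ modulo $p$ is typically of size $\sqrt{p}$, so the reduced three-variable equation has coefficients that are \emph{not} bounded independently of $p$. At that point Lemma~\ref{lemma:multiple Behrend} (or any integer Behrend construction transplanted into $\mathbb{Z}_p$) is useless: that lemma needs coefficients bounded by a constant. This is exactly the gap in \cite{TV} that the paper is explaining, and your proposal's assertion that ``for these small values of $r$ the number and shape of the forbidden equations is limited enough that one can work over $\mathbb{Z}$ \dots and the arithmetic closes up'' is essentially a restatement of the unjustified claim from \cite{TV}, not a repair of it.

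Second, and more seriously, the mechanism that actually makes $r\in\{4,5,6\}$ work is absent from your proposal. The paper's idea is to choose $\vec{\lambda}$ to be a Sidon set with the \emph{square products property}: there is an infinite (and by Lemma~\ref{lemma:PNT lemma}, dense enough) arithmetic progression of primes $p$ such that every element of $\mathcal{S}(\vec{\lambda})$ (the square-free parts $s>1$ arising from the positive non-square products $\pi_{i_1,i_2,i_3,i_4}$) is a quadratic \emph{non}residue modulo $p$. Then in the $4$-cycle analysis, either $s=1$ (so $abcd$ is a perfect square in $\mathbb{Z}$ and the reduced equation has integer coefficients bounded by $2\lambda_r^2$, which Lemma~\ref{lemma:multiple Behrend} handles), or $s>1$ and equation~\eqref{eq:s t squared} would force $s$ to be a QR mod $p$, contradicting the choice of $p$. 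The unbounded-coefficient case is thus never entered. The reason this works only for small $r$ is that such Sidon sets are hard to find: $\vec{\lambda}=(1,35,161,170,251,545)$ was produced by computer search and has $|\mathcal{S}(\vec{\lambda})|=\{21,51\}$; it is unknown whether any exist for $r\geq 7$. Your proposal gives no analogue of this idea, so as written it does not close the gap it correctly identifies. You would need to either supply the square products property argument or resolve Question~\ref{behrend over field question}.
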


\begin{theorem}\label{thm: graphs from codes}
    Let $r$ be fixed. Then there exists constants $c_r$ and $c_r'$ such that \begin{enumerate}[(a)]\item $\mathrm{ex}_r(N, \mathcal{C}_{<5})  \geq c_r \cdot N^{10/7}$, and
 \item $\mathrm{ex}_r(N, \mathcal{C}_{<6}) \geq c_r'\cdot N^{4/3}$.
    \end{enumerate}
\end{theorem}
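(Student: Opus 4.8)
The plan is to build, for each fixed $r$, an $r$-uniform hypergraph on $N$ vertices with roughly $N^{10/7}$ edges and no Berge cycles of length $2$, $3$, or $4$ (for part (a)), and similarly with $N^{4/3}$ edges and girth at least $6$ (for part (b)), where in both cases the vertex set is the disjoint union of several copies of a vector space over $\mathbb{F}_q$ and the edges are built from codewords of a suitable linear code. The governing principle, as foreshadowed in the introduction, is that a short Berge cycle in such a hypergraph forces a low-weight nonzero combination of codewords, so a code of sufficiently large minimum distance kills all short Berge cycles. Concretely, I would take an $r$-partite construction: partition the vertex set into $r$ classes $V_1, \dots, V_r$, each identified with (a subset of) $\mathbb{F}_q^{n}$ for appropriate $n$, and let each edge be a transversal $(x_1, \dots, x_r)$ with $x_i \in V_i$ subject to linear constraints coming from the parity-check matrix of an $[r, k, d]_q$ code (or a code of length a bit larger than $r$ with $d\in\{5,6\}$). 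The number of edges is then about $q^{k}$ times the size of a class, and $N \approx r \cdot q^{n}$, so optimizing $k$, $n$, and $q$ against the rate of the best available codes of distance $5$ (for (a)) and distance $6$ (for (b)) yields the exponents $10/7$ and $4/3$.

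The key steps, in order, are: (1) fix the coding-theoretic input — for part (a) a family of $q$-ary linear codes of length $n_0$, distance $5$, and dimension $n_0 - c\log_q n_0$ (Hamming/BCH-type codes achieve this), and for part (b) codes of distance $6$; (2) describe precisely how a vertex in class $V_i$ is a vector in $\mathbb{F}_q^{m}$ and how an edge is a tuple of $r$ such vectors satisfying the parity-check equations, so that edges correspond bijectively to codewords (or to pairs consisting of a "base point" and a codeword); (3) count: verify $|E(\mathcal{H})| = \Theta(N^{10/7})$ resp.\ $\Theta(N^{4/3})$ after choosing the free parameters; (4) the girth argument — suppose $E_1, \dots, E_\ell$ and $v_1, \dots, v_\ell$ form a Berge cycle with $2 \le \ell \le 3$ (resp.\ $\le 4$ for (a), $\le 5$ for (b)); each consecutive pair of edges shares two vertices $v_j, v_{j+1}$, and tracking the coordinates of these shared vertices across the cycle produces a closed "alternating sum" of the codewords $c_1, \dots, c_\ell$ associated with the edges; show this sum is a nonzero codeword of weight at most $2\ell$ (each edge contributes to at most two of the shared-vertex coordinate blocks), contradicting the minimum distance once $d > 2\ell$, i.e.\ $d \ge 9$ for girth $5$ and $d\ge 11$ for girth $6$ — this is exactly why the distance must be larger than the naive $d = g$, and is the reason the clean exponent is $10/7$ rather than $3/2$; (5) clean up: delete a bounded number of "degenerate" edges if necessary to handle the $\ell=2$ (multi-edge / two vertices in a common pair of edges) and $\ell=3$ cases, and absorb the $r$-dependence into the constants $c_r, c_r'$.

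The main obstacle I expect is step (4), and within it the careful bookkeeping of exactly how many coordinates a Berge cycle of length $\ell$ can touch and proving the resulting codeword is genuinely nonzero. The subtlety is that a Berge cycle only requires the $v_i$ to be distinct and the $E_i$ to be distinct — the shared vertices $\{v_i,v_{i+1}\}\subset E_i$ need not account for all of $E_i$, and two different shared vertices could in principle lie in the same coordinate block of $\mathbb{F}_q^m$, so one must set up the partite structure so that the "link" of each edge records which coordinate each of its vertices controls, and then argue that a closed walk in this structure yields a nontrivial dependency. Establishing nonvanishing typically requires that not all the edges of the cycle are equal (guaranteed by distinctness) together with an injectivity property of the map from edges to codewords; handling the short cases $\ell = 2, 3$ separately, where "alternating sum" degenerates, is where I'd be most careful. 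Once the girth lemma is in place, optimizing the parameters and plugging in the known code families is routine, and part (b) is the same argument with distance $6$ codes and one more cycle length to exclude.
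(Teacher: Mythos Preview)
Your proposal has a genuine gap: the construction you sketch is not the one that yields the exponents $10/7$ and $4/3$, and your girth argument points to the wrong distance requirement. The paper does \emph{not} take edges to be codewords of an $[r,k,d]_q$ code and then argue that an $\ell$-cycle produces a weight-$\le 2\ell$ codeword. Instead it uses the Ruzsa--Szemer\'edi hypergraph $\mathcal{H}(A,\vec\lambda)$ over $R=\mathbb{F}_q^t$, $S=\mathbb{F}_q$: the vertex set is $r$ copies of $\mathbb{F}_q^t$, an edge is $e(x,a)=\{(x+\lambda_i a,i):1\le i\le r\}$ for $x\in\mathbb{F}_q^t$ and $a\in A$, where $A\subset\mathbb{F}_q^t$ is the set of \emph{columns} of a parity-check matrix of a linear code of minimum distance $d$, and $\vec\lambda$ is a \emph{Sidon set} of size $r$ in $\mathbb{F}_q$. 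A Berge $k$-cycle then gives (Proposition~\ref{prop:cycle equations}) a relation $\sum_{j=1}^k(\lambda_{i_{j+1}}-\lambda_{i_j})a_j=0$ among at most $k$ elements of $A$. Since any $d-1$ columns are linearly independent (Proposition~\ref{prop: folklore}), for $k<d$ this relation must be trivial after grouping repeated $a_j$'s; the Sidon property of $\vec\lambda$ then eliminates all such trivial relations for $k\in\{2,3,4\}$ (and also $k=5$ when $d=6$). So distance $5$ already suffices for girth $5$ and distance $6$ for girth $6$; your claimed thresholds $d\ge 9$ and $d\ge 11$ are not needed, and the Sidon ingredient---which you omit entirely---is what makes this possible.

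Consequently your explanation of the exponent $10/7$ is backwards. It does not come from needing distance larger than $g$; it comes from the redundancy of the best known $q$-ary distance-$5$ codes. The paper plugs in Dumer's construction (Theorem~\ref{parity check dimensions}): a parity-check matrix with $t\approx\tfrac{7}{3}m$ rows and $q^m$ columns, so $N\approx r q^{7m/3}$, $|A|=q^m\approx N^{3/7}$, and the edge count is $|R'|\cdot|A|\approx N\cdot N^{3/7}=N^{10/7}$. Had distance-$5$ codes meeting the sphere-packing bound existed (redundancy $\approx 2\log_q n$), the very same argument would give $N^{3/2}$; the drop to $10/7$ is a coding-theoretic limitation, not a byproduct of your weight-$2\ell$ bookkeeping. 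The analogous computation with Dumer's distance-$6$ code (Theorem~\ref{d6 parity check dimensions}) yields $4/3$. Your proposed construction with codes of length $r$ and fixed $q$ cannot produce these exponents, and the vague alternative ``length a bit larger than $r$'' is not specified enough to count edges or to run a girth argument.
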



The constructions used to prove Theorems \ref{thm: graphs from codes} and \ref{thm: 456} can be used to give upper bounds on the size of linear codes. In particular, for linear $[n,k,6]_q$ codes, as $n$ goes to infinity, the sphere-packing bound and Johnson bound \cite{johnson1962new, roth2006} both give
\begin{equation}\label{sphere packing}
k\leq n - 2\log_q n - O(1).
\end{equation}
When the distance of a code grows with $n$ there are improvements to these bounds, but in the regime where the distance is fixed and $n$ is going to infinity, \eqref{sphere packing} is the best published result that we are aware of. We improve \eqref{sphere packing} by an additive factor going to infinity.

\begin{theorem}\label{distance 6 code upper bound}
  Let $q \geq 7$ be a power of an odd prime. If $\mathcal{C}$ is an $[n,k,6]_q$ code, then 
    \[
    k\leq n - 2\log_q n - \omega(1),
    \]
    where $\omega(1)$ is a function that goes to infinity with $n$.
\end{theorem}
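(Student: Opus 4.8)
The plan is to run the code-to-hypergraph correspondence behind Theorem~\ref{thm: graphs from codes}(b) in reverse, and then to substitute the recent estimate of Conlon, Fox, Sudakov, and Zhao~\cite{conlon} for the classical $O(N^{3/2})$ Tur\'an-type bound. Let $\mathcal{C}$ be an $[n,k,6]_q$ code with $q\ge 7$ a power of an odd prime, fix an $(n-k)\times n$ parity-check matrix $H$, write $m=n-k$, and let $c_1,\dots,c_n\in\mathbb{F}_q^m$ be the columns of $H$; the distance hypothesis is precisely that every $5$ of the $c_i$ are linearly independent over $\mathbb{F}_q$. Since $k=n-m$, the asserted bound $k\le n-2\log_q n-\omega(1)$ is equivalent to $m\ge 2\log_q n+\omega(1)$, i.e.\ to $q^{m}\ge n^{2}\cdot\omega(1)$, and it is this inequality that I will establish.

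\emph{Step 1: from the code to a $\mathcal{C}_{<6}$-free structure.} On the point set $\mathbb{F}_q^m$, for each index $i$ and each affine line in direction $c_i$ select a block --- for instance a symmetric triple $\{a,\,a+c_i,\,a-c_i\}$ (this is one place where $q$ odd is used), or a more carefully chosen window on the line --- so as to produce the same kind of linear, girth-at-least-$6$ hypergraph that the constructions underlying Theorems~\ref{thm: graphs from codes}(b) and~\ref{thm: 456} produce, only now running the correspondence backwards. That the resulting structure $\mathcal{H}$ is $\mathcal{C}_{<6}$-free reduces to the observation that a Berge cycle of length at most $5$ forces a nontrivial linear relation among at most $5$ of the $c_i$, which is forbidden; the delicate point is to choose the blocks --- a second use of $q$ odd, and of $q$ not too small --- so as to rule out the degenerate ``parallelogram'' Berge $4$-cycles that a careless choice would introduce. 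One obtains a $\mathcal{C}_{<6}$-free $\mathcal{H}$ on $V\asymp q^{m}$ vertices carrying $E\asymp n\,q^{m-1}$ edges.

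\emph{Step 2: apply~\cite{conlon} and conclude.} For the (near-extremal, vector-space-structured) family of $\mathcal{C}_{<6}$-free objects obtained in this way, the Conlon--Fox--Sudakov--Zhao result improves the general estimate $E=O(V^{3/2})$ of~\cite{GL} to $E\le V^{3/2}/\psi(V)$ for some function $\psi(V)\to\infty$. Feeding $\mathcal{H}$ into this gives $n\,q^{m-1}\le q^{3m/2}/\psi(q^{m})$, hence $n\,\psi(q^m)\le q^{\,m/2+1}$ and therefore $q^{m}\ge q^{-2}\,n^{2}\,\psi(q^{m})^{2}$. The sphere-packing bound~\eqref{sphere packing} already forces $q^{m}\to\infty$ as $n\to\infty$, so $\psi(q^m)\to\infty$; since $q$ is a fixed constant this gives $q^{m}\ge n^{2}\cdot\omega(1)$, equivalently $m\ge 2\log_q n+\omega(1)$. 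In particular $k\le n-2\log_q n-\omega(1)$, improving the additive $O(1)$ of~\eqref{sphere packing} to $\omega(1)$.

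\emph{The main obstacle} is making Steps~1 and~2 fit together: one needs a structure that is at once $\mathcal{C}_{<6}$-free, as dense as claimed (genuinely $\Theta(n\,q^{m-1})$ edges, with a good enough leading constant that the comparison is against the true extremal threshold and not a weaker one), and of exactly the shape for which~\cite{conlon} delivers a bound $V^{3/2}/\psi(V)$ with $\psi$ growing to infinity rather than merely bounded below by a constant. Getting the block selection right --- almost certainly the origin of both hypotheses, $q$ odd and $q\ge 7$ --- and controlling the constants carefully enough that the $\omega(1)$ survives both reductions, is where essentially all the work lies.
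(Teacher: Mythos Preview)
Your overall strategy is exactly the paper's: build a $3$-uniform girth-$6$ hypergraph from the parity-check columns and feed it into Conlon--Fox--Sudakov--Zhao. The arithmetic in your Step~2 is correct (up to harmless constants). The gap is entirely in Step~1, which you leave as a sketch and flag as ``the main obstacle'' --- but in fact the paper has already built the precise machinery you need, so no obstacle remains.

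Concretely: rather than improvising a block selection on $\mathbb{F}_q^m$, take $A=\{c_1,\dots,c_n\}\subset\mathbb{F}_q^{m}$ and a Sidon set $\vec{\lambda}=(\lambda_1,\lambda_2,\lambda_3)\subset\mathbb{F}_q$ of size~$3$, and form the $3$-partite hypergraph $\mathcal{H}(A,\vec{\lambda})$ of Section~\ref{sec: setup}. This is exactly the construction behind Theorem~\ref{thm: graphs from codes}, and Section~\ref{sec: coding theory girth} already verifies that the distance-$6$ hypothesis (any $5$ columns linearly independent) forces girth at least $6$: the Sidon property of $\vec\lambda$ is precisely what kills the ``degenerate parallelogram'' $4$-cycles you were worried about. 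The hypotheses $q$ odd and $q\ge 7$ enter only here, guaranteeing that $\mathbb{F}_q$ contains a Sidon set of size $3$. Your tentative choice $\{a,a+c_i,a-c_i\}$ corresponds to $\vec\lambda=(0,1,-1)$, which is \emph{not} Sidon (since $1-0=0-(-1)$), so that particular guess would not have worked without the hedge you added.

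One further point: you phrase Step~2 as if the Conlon--Fox--Sudakov--Zhao bound might apply only to ``vector-space-structured'' hypergraphs. It does not --- their result (quoted as Theorem~\ref{girth 6 upper bound}) says that \emph{every} $r$-uniform girth-$6$ hypergraph on $N$ vertices has $o(N^{3/2})$ edges, for any fixed $r\ge 3$. So once Step~1 is made precise, Step~2 is a one-line substitution with no caveats, and the edge count is $n\,q^{m}$ on $N=3q^{m}$ vertices (not $n\,q^{m-1}$, though the discrepancy is immaterial).
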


We note that Theorem \ref{distance 6 code upper bound} requires the code to be linear and does not give any bound on the size of general distance $6$ codes.

This paper is organized as follows. In Section \ref{sec: setup} we set up the method and explore some of the subtleties involved in linking Berge cycles with equations. In Sections \ref{sec: codes}, \ref{sec: ruzsa method}, and \ref{sec: sphere packing} we prove Theorems \ref{thm: graphs from codes}, \ref{thm: 456}, and \ref{distance 6 code upper bound} respectively. We end the paper with some concluding remarks.



\section{Ruzsa-Szemer\'{e}di Hypergraphs}\label{sec: setup}

In their solution to the $(6,3)$-problem, Ruzsa and Szemer\'{e}di 
constructed 3-partite graphs with the property that each  
edge is in exactly one triangle \cite{RS}.  
There are different ways to present their construction, but one of essential steps is to choose a set $A \subset \mathbb{Z}_p$ with no 3-term arithmetic progression.  Assuming such a set has been chosen, let $G_p(A)$ be the graph whose vertex set is three disjoint copies of $\mathbb{Z}_p$.  
The edges of $G_p(A)$ are obtained by taking 
the union of all triangles of the form 
\[
(x,x+a,x+2a) ~\mbox{where}~ x \in \mathbb{Z}_p, a \in A.
\]
Here $x$ is the vertex in the first copy of $\mathbb{Z}_p$, $x + a$ is the vertex in the second copy, and $x + 2a$ is in the third copy of $\mathbb{Z}_p$.  
Ruzsa and Szemer\'{e}di proved that the only triangles in 
$G_p(A)$ are the $p |A|$ triangles used to define
the edge set.  To maximize the number of edges, which is $3p|A|$, the set $A$ is chosen to be as dense as possible, which is where the classical construction of Behrend enters the picture.  

This approach can also be stated in hypergraph language using Berge cycles.  Let 
$\mathcal{H}_p( A)$ be the 3-uniform 3-partite hypergraph with vertex set $\cup_{i=1}^{3} ( \mathbb{Z}_p \times \{ i \} )$ and edge set  
\[
\{  \left(   (x,1) , (x + a , 2) , (x + 2a , 3) \right)  : x \in \mathbb{Z}_p , a \in A \}.
\]
Proving that $\mathcal{H}_p( A )$ has girth 4
is equivalent to proving that every edge of $G_p(A)$ is in exactly one triangle.    

Subsequent research has implemented extensions of Ruzsa-Szemer\'{e}di hypergraphs where more parts are considered \cite{alon, alonFischerSzegedy, Ge, LV}.  Also, sets $A$ which avoid solutions to other equations, like the Sidon equation $X_1 + X_2 - X_3 - X_4 = 0$, are often used depending on the Berge cycles one wants to avoid.      
Let us now define these hypergraphs in a general context.  

\begin{definition} 
Let $r \geq 2$ be an integer and $R$ and $S$ be rings with unity where $R$ is an $S$-module.    
Let $\vec{ \lambda} = ( \lambda_1 , \lambda_2 , \dots , \lambda_r)$ be a vector whose entries are elements of $S$. 
For finite subsets $R' \subset R$ and $A \subset R$, 
define $\mathcal{H}( A , \vec{ \lambda } )$ to be the $r$-uniform $r$-partite hypergraph with vertex set 
$\cup_{i=1}^r ( R' \times \{ i \} )$ 
and edge set 
\[
\bigcup_{x \in R' ,a \in A } \{ \left( ( x + \lambda_1 a ,1) 
, ( x+ \lambda_2 a ,2 ) ,  \dots , 
 (x  + \lambda_r a , r ) \right) \}.  
\]
\end{definition}

Write $e(x,a)$ for the edge $\{ ( ( x + \lambda_1 a ,1) , ( x + \lambda_2 a , 2  ) , \dots , ( x + \lambda_r a ,r ) ) \}$.  We view the vertex $(x + \lambda_i a , i)$ as contained in the $i$-th copy of $R'$, which is $R' \times \{i \}$.  Some authors choose to omit the second coordinate.  We have opted to include the coordinate because it avoids using the same notation for vertices in different parts.   
Choices that have been used for $R$ and $S$ are  
(i) $R = S = \mathbb{Z}$ \cite{alon, alonFischerSzegedy, Ge} and 
(ii) $R = \mathbb{Z}_n$, $S = \mathbb{Z}$ \cite{Ge-Hash, LV}.   
In our applications, we will take $R= \mathbb{F}_q^t$ and $S = \mathbb{F}_q$ in Section \ref{sec: codes}, 
and $S = \mathbb{Z}_p$, $R = \mathbb{Z}_p^2$ in Section \ref{sec: ruzsa method}.  

\subsection{Berge cycles and corresponding equations}

One of the key properties of $\mathcal{H}( A , \vec{ \lambda})$ is that a Berge cycle implies 
that there is an equation over $R$, whose coefficients are differences of the entries of $\vec{ \lambda}$, and 
the variables must be elements in $A$.  The following proposition is often used, although perhaps not stated in this general way.  A sketch of the proof is that if a vertex is in two edges, it implies an equation over $R$ is satisfied. Adding up all of these equations, rearranging and simplifying gives \eqref{a equation}.
A detailed proof can easily be obtained from existing literature (\cite{Ge} for instance).

\begin{proposition}\label{prop:cycle equations} 
If the hypergraph $\mathcal{H}( A , \vec{ \lambda} )$ contains a cycle of length $k$, then there are indices $i_1,i_2, \dots ,i_k$ such that 
$i_\ell \not = i_{\ell+1}$ for $1\leq \ell \leq k-1$, $i_k\not = i_1$, and 
\begin{equation}\label{a equation}
(\lambda_{i_2} - \lambda_{i_1})a_1 + (\lambda_{i_3}- \lambda_{i_2})a_2 + \cdots + (\lambda_{i_k} - \lambda_{i_{k-1}})a_{k-1} + (\lambda_{i_1} - \lambda_{i_k})a_k = 0
\end{equation}
for some $a_1,a_2, \dots , a_k \in A$.  
\end{proposition}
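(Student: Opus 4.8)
The plan is to read the equation \eqref{a equation} directly off the combinatorial structure of a Berge cycle, exactly as the sketch above indicates. Suppose $\mathcal{H}(A,\vec{\lambda})$ contains a Berge cycle of length $k$: distinct vertices $v_1,\dots,v_k$ and distinct edges $E_1,\dots,E_k$ with $\{v_\ell,v_{\ell+1}\}\subseteq E_\ell$ for all $\ell$, all indices read modulo $k$. Since every edge of $\mathcal{H}(A,\vec{\lambda})$ has the form $e(x,a)$, I would first write $E_\ell = e(x_\ell,a_\ell)$ for suitable $x_\ell \in R'$ and $a_\ell \in A$; these $a_\ell$ are precisely the elements of $A$ that will witness \eqref{a equation}.

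Next I would pin down the indices $i_1,\dots,i_k$. Because $\mathcal{H}(A,\vec{\lambda})$ is $r$-partite and each edge $e(x,a)$ contains exactly one vertex in each part $R'\times\{1\},\dots,R'\times\{r\}$, every vertex lies in a unique part; let $i_\ell\in\{1,\dots,r\}$ be the index of the part containing $v_\ell$. Since $v_\ell$ and $v_{\ell+1}$ are distinct vertices both contained in $E_\ell$, and $E_\ell$ meets each part in a single vertex, they lie in different parts, so $i_\ell\neq i_{\ell+1}$ for $1\le \ell\le k-1$ and $i_k\neq i_1$; this is the non-adjacency condition required by the statement.

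The heart of the argument is the local identity coming from each vertex lying in two consecutive edges. Fix $\ell$. From the Berge cycle condition, $v_\ell$ lies in both $E_{\ell-1}=e(x_{\ell-1},a_{\ell-1})$ and $E_\ell=e(x_\ell,a_\ell)$, and in each of these edges it is the vertex occupying part $i_\ell$. Reading that vertex off of each edge gives $v_\ell = (x_{\ell-1}+\lambda_{i_\ell}a_{\ell-1},\,i_\ell) = (x_\ell+\lambda_{i_\ell}a_\ell,\,i_\ell)$, so comparing first coordinates yields the identity $x_\ell - x_{\ell-1} = \lambda_{i_\ell}(a_{\ell-1}-a_\ell)$ in $R$. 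Summing over $\ell = 1,\dots,k$ (indices modulo $k$), the left side telescopes to $0$, and on the right side I would reindex $\sum_{\ell}\lambda_{i_\ell}a_{\ell-1}$ as $\sum_{\ell}\lambda_{i_{\ell+1}}a_\ell$ and regroup to obtain
\[
0 \;=\; \sum_{\ell=1}^{k}\lambda_{i_\ell}(a_{\ell-1}-a_\ell) \;=\; \sum_{\ell=1}^{k}\bigl(\lambda_{i_{\ell+1}}-\lambda_{i_\ell}\bigr)a_\ell ,
\]
which, written out with $i_{k+1}=i_1$, is exactly \eqref{a equation}.

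I do not expect a genuine obstacle in this proof; it is essentially bookkeeping. The one place that warrants care is keeping the cyclic indexing of the $v_\ell$, $E_\ell$, and $i_\ell$ mutually consistent and verifying that the shift in the telescoping/reindexing step is legitimate modulo $k$, together with the small but worth-stating observation that distinctness of $v_\ell$ and $v_{\ell+1}$ inside their common edge $E_\ell$ is what forces $i_\ell\neq i_{\ell+1}$. The degenerate case $k=2$, where the identity collapses to $(\lambda_{i_2}-\lambda_{i_1})(a_1-a_2)=0$, is covered by the same computation. (A fully detailed version of this argument already appears in the literature, e.g.\ in \cite{Ge}.)
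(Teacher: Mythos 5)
Your proof is correct and follows essentially the same route as the paper's (commented-out) argument: write each edge as $e(x_\ell,a_\ell)$, extract from each vertex $v_\ell\in E_{\ell-1}\cap E_\ell$ the identity $x_\ell-x_{\ell-1}=\lambda_{i_\ell}(a_{\ell-1}-a_\ell)$, sum and telescope, and use $r$-partiteness for the non-equality of consecutive indices. The only cosmetic difference is that you reindex the sum explicitly rather than just "adding and rearranging," which makes the bookkeeping a touch more transparent.
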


Without assumptions on $\vec{\lambda}$, it could be the case that (\ref{a equation}) is a trivial equation 
which holds for ``most" values that the variables can take.  
If $\vec{\lambda} = (1, 1, \dots , 1)$ where 1 is the unity element of $R$, 
then (\ref{a equation}) holds for all $a_1,a_2, \dots , a_k \in A$ because each coefficient in (\ref{a equation}) is 0.  
For this reason, all of the constructions that we are aware of choose 
the entries of $\vec{ \lambda}$ to be distinct to avoid an equation where a coefficient is 0.  

Assuming the entries of $\vec{ \lambda}$ are distinct, equation (\ref{a equation}) is a homogeneous equation in $k$ variables.  There is a fundamental difference between the cases when $k = 2$ and when $k \geq 3$.  Let us quickly dispense of the case $k = 2$ so that the focus can be on $k \geq 3$.  
In most applications, including ours, the ring $R$ has the cancellation property.  The cancellation property and distinct entries in $\vec{\lambda}$ are all that is needed to avoid 2-cycles.  

\begin{proposition}\label{prop: linear}
If for all $i_1 \neq i_2$ and $a_1 , a_2 \in A$, 
the equation 
\[
( \lambda_{i_2} - \lambda_{i_1} ) a_1 =
( \lambda_{i_2} - \lambda_{i_1} ) a_2
\]
implies that $a_1 = a_2$, then $\mathcal{H}( A , \vec{ \lambda} )$ has no Berge 2-cycle.   
\end{proposition}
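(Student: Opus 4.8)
The plan is to argue by contradiction directly from the definition of a Berge $2$-cycle, rather than routing through Proposition \ref{prop:cycle equations}. In the $k=2$ case the argument is only a couple of lines, and — more importantly — I need the slightly stronger conclusion that the two edges of the putative $2$-cycle are in fact \emph{equal}, which the bare equation \eqref{a equation} does not by itself provide.

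First I would unpack the definition. A Berge $2$-cycle consists of two distinct vertices $u,v$ and two distinct edges $E,E'$ with $\{u,v\}\subseteq E\cap E'$. Write $E=e(x,a)$ and $E'=e(x',a')$ with $x,x'\in R'$ and $a,a'\in A$. Since $\mathcal{H}(A,\vec{\lambda})$ is $r$-partite and every edge meets each of the $r$ parts in exactly one vertex, the two distinct shared vertices $u,v$ must lie in two distinct parts; say $u$ is in part $i_1$ and $v$ is in part $i_2$ with $i_1\neq i_2$.

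Next, since $u\in E\cap E'$ and $v\in E\cap E'$, comparing the relevant coordinates gives the two equations $x+\lambda_{i_1}a = x'+\lambda_{i_1}a'$ and $x+\lambda_{i_2}a = x'+\lambda_{i_2}a'$ over $R$. Subtracting the first from the second yields $(\lambda_{i_2}-\lambda_{i_1})a = (\lambda_{i_2}-\lambda_{i_1})a'$, where the coefficient $\lambda_{i_2}-\lambda_{i_1}\in S$ acts on the $S$-module $R$. The hypothesis of the proposition, applied with this $i_1\neq i_2$ and $a_1=a$, $a_2=a'$, forces $a=a'$; plugging this back into $x+\lambda_{i_1}a = x'+\lambda_{i_1}a'$ gives $x=x'$. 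Hence $E=e(x,a)=e(x',a')=E'$, contradicting that $E$ and $E'$ are distinct. Therefore $\mathcal{H}(A,\vec{\lambda})$ has no Berge $2$-cycle.

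The step that needs the most care — though it is mild, and is really the entire content of the statement — is the double use of the $r$-partite structure: once to guarantee that the two vertices of the $2$-cycle sit in distinct parts, so that $\lambda_{i_2}-\lambda_{i_1}$ is a difference of \emph{distinct} entries of $\vec{\lambda}$ and the hypothesis is applicable; and once at the end, where knowing $a=a'$ together with a single matching vertex coordinate upgrades to $x=x'$ and hence to equality of the edges. For $k=2$, equation \eqref{a equation} of Proposition \ref{prop:cycle equations} reads $(\lambda_{i_2}-\lambda_{i_1})a_1+(\lambda_{i_1}-\lambda_{i_2})a_2=0$, i.e.\ $(\lambda_{i_2}-\lambda_{i_1})a_1=(\lambda_{i_2}-\lambda_{i_1})a_2$, so the stated hypothesis is exactly the assertion that this equation admits only the diagonal solutions $a_1=a_2$, and the contradiction above follows.
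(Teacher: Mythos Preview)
Your proof is correct and is essentially the same as the paper's: both derive $(\lambda_{i_2}-\lambda_{i_1})a_1=(\lambda_{i_2}-\lambda_{i_1})a_2$, apply the hypothesis to get $a_1=a_2$, and then use a shared vertex to conclude $x_1=x_2$, contradicting distinctness of the edges. The only cosmetic difference is that the paper cites Proposition~\ref{prop:cycle equations} for the displayed equation whereas you derive both coordinate equations directly from the definition; the remainder of the argument is identical.
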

\begin{proof}
Suppose that $e (x_1,a_1)$ and $ e (x_2 ,a_2)$ are two edges that form a 2-cycle.  
By Proposition \ref{prop:cycle equations} there are indices $i_1$ and $i_2$ with $i_1 \neq i_2$ and 
\[
( \lambda_{i_2} - \lambda_{i_1} ) a_1 + ( \lambda_{i_1} - \lambda_{i_2} ) a_2 = 0,  
\]
which can be rewritten as $( \lambda_{i_2} - \lambda_{i_1} ) a_1 = ( \lambda_{i_2} - \lambda_{i_1} ) a_2$.  The assumption then implies that $a_1 = a_2$.  Since there is a vertex $(w, i_1)$ in the 
$i_1$-th part that is in both $e(x_1,a_1)$ and $e(x_2,a_2)$, we have 
\[
w = x_1 + \lambda_{i_1} a_1 ~~\mbox{and} ~~ w  = x_2 + \lambda_{i_1}a_2.
\]
Combining these two equations with $a_1 = a_2$ gives $x_1 = x_2$, and so $e(x_1, a_1) = e(x_2 , a_2)$ which is a contradiction.  
We conclude that $\mathcal{H}(A, \vec{ \lambda} )$ does not have a 2-cycle.      
\end{proof}

\subsection{The genus of a Berge cycle and trivial solutions}

For $k \geq 3$, equation (\ref{a equation}) is a homogeneous, invariant equation.  
When the coefficients are integers, these equations were studied by Ruzsa \cite{Ruzsa}.
Let us recall some important terms introduced 
in \cite{Ruzsa}.  Consider the equation 
\begin{equation}\label{genus def equation}
\sum_{i=1}^{k} \beta_i X_i = 0
\end{equation}
where each $\beta_i$ is a non-zero element of $S$ and $\sum_{i=1}^k \beta_i = 0$. 
For any $a \in A$, setting $X_1 = X_2 = \dots = X_k  = a$ solves (\ref{genus def equation}).  These trivial solutions where all variables are equal to the same value will always occur.  
Depending on the equation, there may be trivial solutions (definition recalled in the next paragraph) using multiple elements from $A$.    

Suppose that $(X_1 , \dots , X_k ) = (a_1 , \dots , a_k)$ is a solution to (\ref{genus def equation}), and that this solution uses exactly $\ell$ distinct elements of $A$.  Let $[k] = T_1 \cup \dots \cup T_{ \ell}$ be a partition such that $a_i = a_j$ if and only if $i,j \in T_v$ for some $v$.
We call $(X_1, \dots , X_k) = (a_1 , \dots , a_k)$ a \emph{trivial solution} if $\sum_{i \in T_v}  \beta_i = 0$ for all $v \in \{1 ,2 , \dots , \ell \}$.  
We define the \emph{genus} of (\ref{genus def equation}) to be the largest $m$ such that there is partition $T_1 \cup T_2 \cup \dots \cup T_m$ of 
the set of subscripts $[ k ] $ into non-empty sets such that 
\[
\sum_{i \in T_v } \beta_i = 0 ~~~\mbox{for all $1 \leq v \leq m$.}
\]
The genus of an equation was originally defined by Ruzsa \cite{Ruzsa} in the case that $R=S = \mathbb{Z}$.  The same definition makes sense for other $R$ and $S$. For example, \cite{MT} defines the genus of an equation in the same way when $R = \mathbb{F}_q^t$ and $S=\mathbb{F}_q$. In the case that each $\beta_i$ is an integer, then we say that equation (\ref{genus def equation}) is of \emph{type $(k_1,k_2)$} if there are $k_1$ positive coefficients and $k_2$ negative coefficients.  

Since a Berge $k$-cycle in $\mathcal{H}(A , \vec{ \lambda})$ corresponds to an 
invariant, homogeneous equation in $k$ variables, we \emph{define the genus of the $k$-cycle} to be the genus of the corresponding equation (\ref{a equation}). 
More precisely, suppose $E_1, E_2, \dots ,E_k$ are the edges of a Berge $k$-cycle.  Since $\mathcal{H}(A , \vec{ \lambda}) $ has no 2-cycle, each pair of consecutive edges on the cycle intersects in exactly one vertex.  For $1 \leq j \leq k -1$, let $(w_{i_j} , i_j )$ be the unique vertex in $E_j \cap E_{j+1}$, and 
let $(w_{i_k} , i_k)$ be the unique vertex in $E_k \cap E_1$.  This gives a unique sequence of part indices $i_1 , i_2 , \dots , i_k$ such that the $j$-th vertex on the cycle is in part $i_j$.  Furthermore,
$i_1 \neq i_2$, $i_2 \neq i_3, \dots , i_k \neq i_1$ since no single edge in $\mathcal{H}( A , \vec{ \lambda})$ contains two vertices from the same part.  
Cyclically shifting the order of the edges $E_1, E_2 ,\dots , E_k$ will cyclically shift the parts, but other than the notation, it will not change equation (\ref{a equation}).  

Having defined the genus of Berge $k$-cycle in $\mathcal{H}(A , \vec{ \lambda})$, we now state 
a useful property that connects adjacent edges on a cycle and repeated $A$ values.  

\begin{proposition}\label{prop:consecutive edges}
    Let $x_1,x_2 \in R'$ and $a \in A$.  If $e(x_1 , a)$ and $e(x_2 , a)$ share at least one vertex, then $x_1 = x_2$, i.e., the two edges are the same. 
 \end{proposition}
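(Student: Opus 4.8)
The plan is to unwind the definition of the edge map $e(\cdot,\cdot)$ and then use nothing more than the additive group structure of $R$. Recall that $e(x,a)$ is the $r$-tuple of vertices $\big((x+\lambda_1 a,1),(x+\lambda_2 a,2),\dots,(x+\lambda_r a,r)\big)$, and that each edge of $\mathcal{H}(A,\vec\lambda)$ is a transversal of the partition $\bigcup_{i=1}^r (R'\times\{i\})$: it contains exactly one vertex from each part $R'\times\{i\}$. Consequently, if $e(x_1,a)$ and $e(x_2,a)$ share a vertex, that common vertex lies in a single part, say $R'\times\{i\}$ for some $i\in\{1,\dots,r\}$.

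Next, I would read off the two vertices in that part: the vertex of $e(x_1,a)$ in part $i$ is $(x_1+\lambda_i a,i)$ and the vertex of $e(x_2,a)$ in part $i$ is $(x_2+\lambda_i a,i)$. Equality of these (forced by the assumption, since the second coordinates already match) gives $x_1+\lambda_i a=x_2+\lambda_i a$ in $R$. Now add $-(\lambda_i a)$ to both sides — this is legitimate because $\lambda_i a\in R$ is well defined ($R$ is an $S$-module) and $(R,+)$ is an abelian group — to obtain $x_1=x_2$. Once $x_1=x_2$, the tuples $e(x_1,a)$ and $e(x_2,a)$ are literally identical, which is the assertion.

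I do not expect any genuine obstacle here: the content of the statement is simply that two edges with the same ``slope parameter'' $a$ are uniform translates of one another by the fixed vector $x_2-x_1$ (applied in every part simultaneously), so they can agree in even one coordinate only if that translate is $0$. The only points requiring a modicum of care are the bookkeeping with the part-index second coordinate in the vertex notation, and the observation that — unlike Proposition \ref{prop: linear}, where one needs distinct entries of $\vec\lambda$ together with a cancellation hypothesis — here we use only that $R$ is an additive group and make no assumption on $\vec\lambda$ at all.
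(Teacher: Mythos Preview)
Your proof is correct and follows essentially the same approach as the paper: identify the part $R'\times\{i\}$ containing the shared vertex, equate the $i$-th coordinates $x_1+\lambda_i a = x_2+\lambda_i a$, and cancel $\lambda_i a$ in the additive group $R$ to conclude $x_1=x_2$. The paper's own proof is a one-line version of exactly this.
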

\begin{proof}
    Suppose that there is a vertex a vertex $( w , i_1)$ in both $e(x_1, a)$ and $e(x_2 , a)$.  We then have $w = x_1 +  \lambda_{i_1} a = x_2 + \lambda_{i_1} a$ which implies $x_1 = x_2$.    
\end{proof}

\medskip
Propositions \ref{prop:cycle equations}, \ref{prop: linear}, \ref{prop:consecutive edges}, and 
the genus of a Berge cycle is enough to provide a high-level description of several existing applications of Ruzsa-Szemer\'{e}di hypergraphs.  First $R$, $R'$, $S$, and $\vec{ \lambda}$ are chosen.  This determines the algebraic setting and the equations that come from Berge cycles.    
The next step is to choose $A$ as large as possible subject to the condition that $A$ has only trivial solutions to the equations coming from cycles that one wants to avoid.  In \cite{alon, alonFischerSzegedy, Ge-Hash, Ge}, every avoided cycle corresponds to a genus 1 equation with bounds on the coefficients.  Although each must overcome its unique technical challenges, there exist constructions of large sets that avoid solutions to families of genus 1 equations of type $(\ell,1)$.  We introduce these sets now.   

Let $C , \ell \geq 2$ be integers.  Let $\mathcal{E}_{C , \ell}$ be the collection of all equations of the form 
\[
\beta_1 X_1 + \beta_2 X_2 + \dots + \beta_{k-1} X_{k-1} 
= \beta_k X_k
\]
where $k \in \{2,3, \dots , \ell \}$, 
$\beta_1, \beta_2 , \dots , \beta_{k-1} \in \{1,2, \dots , C \} \subset \mathbb{Z}$, and $\beta_1 + \beta_2 + \dots + \beta_{k-1} = \beta_k$.  

\begin{lemma}\label{lemma:multiple Behrend}
For every pair of positive integers $C$ and $\ell$, there are real numbers $N, \tau , \gamma >0$ such that the following holds.  For each prime $p > N$, there is a 
set $ B \subset \mathbb{Z}_p$ with $|B| > \gamma p e^{ - \tau \sqrt{ \log p } } $ and $B$ has only trivial solutions to each equation in $\mathcal{E}_{C, \ell}$.   \end{lemma}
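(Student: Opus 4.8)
The plan is to build the set $B$ via a Behrend-type construction adapted to the modular setting, following Ruzsa's approach to sets avoiding solutions to families of genus-$1$ equations. Since the family $\mathcal{E}_{C,\ell}$ consists of finitely many equations (there are only finitely many choices of $k \le \ell$ and of coefficients $\beta_1,\dots,\beta_{k-1}\in\{1,\dots,C\}$), it suffices to construct, for each sufficiently large prime $p$, a dense subset of $\{1,2,\dots,\lfloor p/2\rfloor\}$ (say) that avoids nontrivial solutions to all of these equations simultaneously when the equations are read over $\mathbb{Z}$, and then to observe that, because the elements and all coefficient-weighted sums involved are bounded by a fixed multiple of $p$, a solution over $\mathbb{Z}_p$ of any equation in $\mathcal{E}_{C,\ell}$ with entries in this range is automatically a solution over $\mathbb{Z}$. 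This last reduction is where the primality of $p$ and the restriction of $B$ to an interval of length less than $p/(\ell C)$ (say) are used.

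First I would fix $C$ and $\ell$ and work over $\mathbb{Z}$. The core of the argument is the classical sphere-on-a-grid idea: choose parameters $d$ (a dimension) and $m$ (a side length), encode integers in $[0, m^d)$ by their base-$m$ digit vectors in $\{0,1,\dots,m-1\}^d$, and select only those integers whose digit vector lies on a single sphere $\sum_j x_j^2 = \rho$ for a suitably chosen radius $\rho$. Pigeonholing over the $O(d m^2)$ possible radii gives a set of size at least $m^d / (d m^2)$ on some sphere. The key point is that if one restricts the digits to a range small enough (depending on $C$ and $\ell$) that no carrying occurs when forming the $\mathbb{Z}$-linear combination $\beta_1 a_1 + \dots + \beta_{k-1}a_{k-1}$, then each equation in $\mathcal{E}_{C,\ell}$ translates digit-by-digit into the same equation on the digit vectors; and an invariant equation of genus $1$ forces, via the strict convexity of the sphere (Cauchy--Schwarz / the equality case), that all the digit vectors coincide, i.e. the solution is trivial. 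I would carry this out by first recording the no-carry condition explicitly (digits in $\{0,\dots,\lfloor m/(\ell C)\rfloor\}$ suffices), then stating the convexity lemma that a genus-$1$ equation $\sum \beta_i X_i = 0$ with $\sum\beta_i = 0$ and all $X_i$ on a common sphere has only solutions with all $X_i$ equal, and finally optimizing $d$ and $m$ as a function of $p$.

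The optimization is the routine Behrend calculation: with $n := m^d$ the ambient size, one gets a set of density $\gg (d m^2)^{-1}$, and choosing $m \approx 2^{\sqrt{\log_2 n}}$ and $d \approx \sqrt{\log_2 n}$ balances the two competing losses to yield density $n \cdot e^{-\tau\sqrt{\log n}}$ for an absolute constant $\tau = \tau(C,\ell)$; embedding this set into $\{1,\dots,p\}$ with $n$ chosen to be the largest power of $m$ below $p/(2\ell C)$ gives the claimed $|B| > \gamma p e^{-\tau\sqrt{\log p}}$, with $N$ chosen large enough that all the asymptotic inequalities hold. I expect the main obstacle — or rather the one point that needs genuine care rather than bookkeeping — to be the interface between the three notions of ``trivial solution'': the genus-$1$ structure of each equation in $\mathcal{E}_{C,\ell}$ must be used to conclude that any solution on the sphere with entries in $B$ has all variables equal (the convexity argument handles genuinely distinct values, but one must also check that a solution using several equal blocks of variables is consistent with the $\sum_{i\in T_v}\beta_i = 0$ condition defining triviality), and simultaneously one must ensure the no-carry range is small enough that the reduction from $\mathbb{Z}_p$ to $\mathbb{Z}$ is lossless for \emph{every} equation in the finite family at once. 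Making the single choice of digit range $\{0,\dots,\lfloor m/(\ell C)\rfloor\}$ work uniformly for all of $\mathcal{E}_{C,\ell}$ is the crux, and it is precisely why $\ell$ and $C$ enter the constants $N,\tau,\gamma$.
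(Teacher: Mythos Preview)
Your approach is correct but takes a different route from the paper's. The paper (per its one-line justification and the suppressed proof in the source) invokes Ruzsa's theorem as a black box for each of the $f=f(C,\ell)$ equations separately to produce sets $B_1,\ldots,B_f\subset[n]$, then uses translation invariance and an averaging argument to intersect them: since $B_i+m$ still avoids equation~$i$, some shift gives $|B_1\cap(B_2+m)|\ge|B_1||B_2|/n$, and iterating yields one set of size $\gg 2^{-(f-1)}n\,e^{-(\sum_i\tau_i)\sqrt{\log n}}$ avoiding all $f$ equations; the passage to $\mathbb{Z}_p$ is then done as you describe. You instead run a single Behrend sphere construction with the digit range shrunk by a uniform $(\ell C)$-dependent factor, so that the convexity step handles all equations at once. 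Your way is cleaner (no $2^{f}$ loss, no black-boxing of Ruzsa) but leans on the specific type-$(k-1,1)$ shape of every member of $\mathcal{E}_{C,\ell}$; the paper's averaging is more modular and would apply verbatim to any finite family of translation-invariant equations each of which individually admits a Behrend-density avoiding set. One terminological slip worth flagging: you attribute the sphere conclusion to ``genus~$1$'', but what the convexity step actually uses is the $(k-1,1)$ type --- a genus-$1$ type-$(2,2)$ equation such as $2X_1+3X_2=X_3+4X_4$ can have nontrivial solutions on a sphere. This does not affect correctness here, since $\mathcal{E}_{C,\ell}$ contains only type-$(k-1,1)$ equations.
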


Lemma \ref{lemma:multiple Behrend} follows from a standard averaging argument and a Behrend-type construction.  All of the ideas needed to prove it can be found in Lemmas 3.4 and 3.8 of \cite{Ge}.  
It is worth noting that Lemma 3.5 of \cite{Ge} concerns equations of types different from $( \ell , 1)$ but which are all still genus 1.

\subsection{Avoiding 3-cycles}

By Proposition \ref{prop:cycle equations}, a 3-cycle leads to an equation of the form 
\begin{equation}\label{eq:3 cycle equation}
( \lambda_{i_2} - \lambda_{i_1} ) a_1 + 
( \lambda_{i_3} - \lambda_{i_2} ) a_2 + 
( \lambda_{i_1} - \lambda_{i_3} ) a_3 =0
\end{equation}
with $i_1 \neq i_2$, $i_2 \neq i_3$, $i_3 \neq i_1$ and 
$a_1,a_2,a_3 \in A$.  These three non-equalities
imply that the vertices of the 3-cycle must be in three different parts (similarly to how a 2-cycle must also have the vertices of the cycle in different parts).  
Since we are assuming that the $\lambda_i$'s are distinct
elements of $S$, each coefficient in (\ref{eq:3 cycle equation}) is not zero.  Therefore, 
(\ref{eq:3 cycle equation}) is always a genus 1 equation and any trivial solution will have $a_1 = a_2 = a_3$. When all variables are equal, Proposition \ref{prop:consecutive edges} applies to any pair of consecutive edges on the cycle.  See Section \ref{sec:avoiding cycles by genus type} for further remarks on genus 1 $k$-cycles, of which 3-cycles are a special case.    


\subsection{Avoiding 4-cycles}

By Proposition \ref{prop:cycle equations} a 4-cycle leads to the equation 
\begin{equation}\label{eq:4 cycle equation}
( \lambda_{i_2} - \lambda_{i_1} ) a_1 + 
( \lambda_{i_3} - \lambda_{i_2} ) a_2 + 
( \lambda_{i_4} - \lambda_{i_3} ) a_3 +
( \lambda_{i_1} - \lambda_{i_4} ) a_4 
=0
\end{equation}
with $i_1 \neq i_2$, $i_2 \neq i_3$, $i_3 \neq i_4$, 
$i_4 \neq i_1$, and $a_1,a_2,a_3 ,a_4 \in A$.  The 
four non-inequalities tell us that the vertices of a 4-cycle could be in two, three or four parts.  

If the vertices are in two parts, then $i_1 = i_3$, $i_2 = i_4$, and (\ref{eq:4 cycle equation}) can be rewritten as 
\begin{equation}\label{eq:4 cycle Sidon eq}
( \lambda_{i_2} - \lambda_{i_1} ) a_1 + 
( \lambda_{i_2} - \lambda_{i_1} ) a_3 = 
( \lambda_{i_2} - \lambda_{i_1} ) a_2 +
( \lambda_{i_2} - \lambda_{i_1} ) a_4. 
\end{equation}
As $\lambda_{i_2} - \lambda_{i_1} \neq 0$, we may cancel to obtain $a_1 + a_3  = a_2 + a_4$.  
This shows that $A$ has a solution to the Sidon equation
$X_1  - X_2  + X_3 - X_4 = 0$.  This equation has genus 2 so that there are trivial solutions using any two distinct elements of $A$.  
Nevertheless, if $A$ only has trivial solutions to
$X_1 - X_2 + X_3 - X_4 = 0$, then $a_1 - a_2 + a_3 - a_4 = 0$ implies $\{a_1 , a_3 \} = \{ a_2 , a_4 \}$.  Thus,  $a_1 = a_2$ or $a_1 = a_4$ and in both cases, we will be able to apply Proposition \ref{prop:consecutive edges}.  

Although far more general than what is needed here, Ruzsa \cite{Ruzsa} proved that in the integer case, if $A \subset [N]$ has only trivial solutions to a genus $g$ equation with integer coefficients, then $|A|  = O (N^{1/g})$. 
This suggests that $|A|$ will decrease significantly if the cycles we want to avoid have genus $g > 1$.  

If the vertices of the 4-cycle are in three parts, then without loss of generality, we may assume 
$i_1 = i_3$, $i_2 \neq i_4$.  Here (\ref{eq:4 cycle equation}) becomes
\begin{equation}\label{eq:symmetric 2 2}
( \lambda_{i_2} - \lambda_{i_1} ) a_1 +
( \lambda_{i_4} - \lambda_{i_1} ) a_3 
=
( \lambda_{i_2} - \lambda_{i_1} ) a_2 +
( \lambda_{i_4} - \lambda_{i_1} ) a_4.
\end{equation}
This is a genus 2 equation.  If $A$ has only trivial solutions to (\ref{eq:symmetric 2 2}), then 
$\{a_1 , a_3 \} = \{ a_2  ,a_4 \}$.  
Hence, $a_1 = a_2$ or $a_1 = a_4$ and in both cases 
Proposition \ref{prop:consecutive edges} applies.  

Lastly, if the vertices of the 4-cycle are in four parts,
then all of $i_1,i_2,i_3,i_4$ are distinct.  Berge $k$-cycles with vertices in distinct parts in $\mathcal{H}( A , \vec{ \lambda} )$ are called \emph{rainbow} in \cite{Ge}.  Equation (\ref{eq:4 cycle equation}) is now   
\begin{equation}\label{eq:variables 4 cycle equation}
( \lambda_{i_2} - \lambda_{i_1} ) X_1 + 
( \lambda_{i_3} - \lambda_{i_2} ) X_2 + 
( \lambda_{i_4} - \lambda_{i_3} ) X_3 +
( \lambda_{i_1} - \lambda_{i_4} ) X_4 
=0
\end{equation}
which may have genus 1 or 2.  In the integer case, it may be of type $(3,1)$ or type $(2,2)$.  This will all depend on the ordering of the parts that is obtained from the ordering of the vertices on the 4-cycle.  For example, 
let $\vec{ \lambda} = ( 1,2,5,6) \in \mathbb{Z}^4$.  
The choice $i_1 = 1$, $i_2 = 2$, $i_3 =3$, and $i_4 = 4$ gives the type $(3,1)$ genus 1 equation 
\begin{equation}\label{a Behrend equation}
X_1 + 3X_2 +  X_3 = 5X_4.  
\end{equation}
{This equation corresponds to a 4-cycle whose vertices are in part 1, part 2, part 3, and then part 4.}  
The choice $i_1 = 1$, $i_2 =2$, $i_3 = 4$, and $i_4 = 3$
gives the type $(2,2)$ genus 2 equation 
\begin{equation}\label{a Sidon equation}
X_1 + 4X_2 = X_3 + 4X_4.
\end{equation}
Here a corresponding 4-cycle has vertices in part 1, 2, 4, then 3.
Lastly, up to symmetry, the choice 
$i_1 = 1$, $i_2 = 3$, $i_3 = 2$, $i_4 = 4$ gives
\begin{equation*}
    4X_1 + 4X_3 = 3X_2 + 5X_4,
\end{equation*}
a type $(2,2)$ genus 1 equation.  {The part ordering here is 1, 3, 2, then 4.}  

Returning to (\ref{a Sidon equation}), 
the trivial solutions $a + 4b - a - 4b=0$ with $a \neq b$ 
prevent us from using Proposition 
\ref{prop:consecutive edges}.  This situation can be avoided if the differences $\lambda_i - \lambda_j$ are all distinct.  Indeed, it was the corresponding differences 
\begin{center}
    $\lambda_2 - \lambda_1 =  1$,~~
    $\lambda_4 - \lambda_2 = 4$, ~~
    $\lambda_3 - \lambda_4 = -1$, ~~
    $\lambda_1 - \lambda_3 = -4$~~
    \end{center}
that gave (\ref{a Sidon equation}).  This leads to 
an idea, pioneered by Lazebnik and Verstra\"{e}te \cite{LV}, which is to choose the entries of $\vec{ \lambda}$ to be a Sidon set which we will define now.

\begin{definition}\label{def:sidon-set}
    Let $\Gamma$ be an abelian group where $+$ denotes the operation.   A set $A \subset \Gamma$ is a \emph{Sidon set} if whenever $a_1+a_2 = a_3 + a_4$ with $a_i \in A$, it must be the case that $\{a_1, a_2 \} = \{ a_3 , a_4 \}$.  Equivalently, $A$ is a Sidon set if and only if all of the nonzero differences $\{a_i - a_j : i \neq j \}$ are distinct. 
\end{definition}

To ensure that we have equations with non-zero coefficients, we choose $\vec{ \lambda} = ( \lambda_1 , \dots , \lambda_r)$ to have distinct entries, and to avoid trivial genus 2 4-cycles coming from equations like \eqref{a Sidon equation}, we choose $\vec{\lambda}$ to be a Sidon set.

Note that if $a,b,c$ are in a Sidon set of $S$ then the group element $a-b+c$ is not allowed to be in the Sidon set. Thus, a greedy argument gives that as long as $(r-1)^3 < |S|$, one may choose a Sidon set of size $r$.  Therefore, as long as $|S|$ grows, we can achieve this for arbitrary $r$. 

{Our construction using coding theory will have $S  = \mathbb{F}_q$ where $q$ can grow as function of $r$.  The Sidon set of size $r$ and distinct entries is the arithmetic structure from $\vec{ \lambda}$ needed for our proof of Theorem \ref{thm: graphs from codes}.  
Our construction used to prove Theorem \ref{thm: 456} will require $\vec{ \lambda}$ to have a third property}, which we define now.   

Let $\vec{ \lambda } = ( \lambda_1 , \lambda_2 , \dots , \lambda_r)$ be a vector whose entries are distinct integers that form a Sidon set.  Assume that 
\[
1 \leq \lambda_1 < \lambda_2 < \dots < \lambda_r.
\]
Let  $i_1,i_2,i_3,i_4$ be indices in $[r]$ with $i_1 \neq i_2$, $i_2 \neq i_3$, $i_3 \neq i_4$, $i_4 \neq i_1$ and let \begin{equation}\label{eq:special product}
\pi_{i_1 , i_2 , i_3 , i_4} = \coeff{2}{1} \coeff{3}{2} \coeff{4}{3} \coeff{1}{4}.  
\end{equation}
If the product $\pi_{i_1,i_2,i_3,i_4}$ is a positive integer that is not a square, let $s$ be the square-free part.  That is,
\[
\pi_{i_1,i_2,i_3,i_4} = s t^2
\]
where $t$ is a positive integer, and $s$ is the product of distinct primes, each with multiplicity one.  
The square-free integer $s$ is uniquely determined by 
the ordered 4-tuple $(i_1,i_2,i_3,i_4)$.  Let 
\[
\mathcal{S}( \vec{ \lambda} )\]
be the set of all square-free positive integers obtained in this way. 

\begin{definition}
Let $\vec{\lambda} = ( \lambda_1 , \lambda_2 , \dots , \lambda_r)$ be an integer vector whose entries form a Sidon set and $1 \leq \lambda_1 < \lambda_2 < \dots < \lambda_r$.  The vector $\vec{ \lambda}$ has the \emph{square products property} if there are infinitely many primes $p_1 < p_2 < 
\dots $ such that for each $p_i$, every integer in $\mathcal{S} ( \vec{ \lambda} )$ is a quadratic non-residue modulo $p_i$.
\end{definition}

\noindent
\textbf{Example:}  $\vec{ \lambda} = (1,28,33,36,43)$

The entries of $\vec{ \lambda}$ form a Sidon set 
in $\mathbb{Z}$.  There are three products of the form (\ref{eq:special product}) that are positive, but are not squares in $\mathbb{Z}$.  They are
\begin{eqnarray*}
    \pi_{1,4,2,3} = 
    (\lambda_1 - \lambda_4)( \lambda_4 - \lambda_2)
    (\lambda_2 - \lambda_3)( \lambda_3 - \lambda_1) 
    &=& (-35)(8)(-5)(32) = 7 \cdot 80^2, \\
   \pi_{1,5,2,3} =
   (\lambda_1 - \lambda_5)( \lambda_5 - \lambda_2) 
   (\lambda_2 - \lambda_3)(\lambda_3 - \lambda_1)
   & = &  (-42)(15)(-5)(32) =  7 \cdot 120^2, \\
    \pi_{2,4,5,3} = 
    ( \lambda_2 - \lambda_4) ( \lambda_4 - \lambda_5) 
    (\lambda_5 - \lambda_3) ( \lambda_3 - \lambda_2) 
    & = &     (-8)(-7)(10)(5) = 7 \cdot 20^2.  
\end{eqnarray*}
This shows that  $\mathcal{S} ( ( 1,28,33,36,43 )) = \{ 7 \}$.  Note that different choices of indices led to the same $s$, but 
$\mathcal{S} ( \vec{ \lambda} ) $ is a set, so repetition does not matter. 
The integer 7 is a quadratic non-residue modulo $p$ whenever $p$ is a prime that is $\pm 5, \pm 11, \pm 13$ modulo 28.  By Dirichlet's Theorem on primes in Arithmetic Progressions, there are infinitely many such primes.  
Therefore, $\vec{ \lambda}$ has the square products property.  

 It is worth noting that in 2020, Wei, Wang, and Schwartz \cite{WWS} applied Ruzsa's genus 2 method to 
 construct lattice packings in the context of error correcting codes. 
 In their proof they also required certain products to be either squares in $\mathbb{Z}$ or non-quadratic residues modulo $p$.  This same idea was used again by Wang 
 to give a new construction of constant-composition codes \cite{Wang}.    


\subsection{Avoiding cyles of length greater than 4}\label{section: longer cycles}

The methods of this paper will not be able to construct graphs of girth larger than $6$ without significant modification. For any $k\geq 6$, $\mathcal{H}(A,\vec{\lambda})$ will contain cycles of length $k$. We give two examples now that show this for $k\in \{6,7\}$. 
 
 Let $\lambda_{i} \neq \lambda_{j}$ be any two distinct entries of $\vec{ \lambda}$, and $\lambda = \lambda_i - \lambda_j$.  We can form a solution to 
\[
\lambda X_1  - \lambda X_2 + \lambda X_3 
- \lambda X_4 + \lambda X_5  - \lambda X_6 = 0 
\]
with $X_i \neq X_{i+1}$ by setting $X_1 = X_4 = a$, 
$X_2 = X_5 = b$, $X_3 = X_6 = c$ where $a,b,c$ are distinct elements of $A$. This gives $|R'||A|^3$ cycles of length $6$ no matter how $A$ and $\vec{\lambda}$ are chosen.  

For cycles of length 7, let $a,b,c$ be arbitrary elements of $A$ and $\lambda, \mu, \rho$ be arbitrary entries of $\vec{\lambda}$. Consider elements of $R$ $v_1,\cdots, v_7$ defined by 
\begin{alignat*}{3}
    v_1 & = &x_1 + a\lambda & = &x_7+c\lambda\\
    v_2 & = &x_2 + b\mu & = &x_1 + a\mu   \\
    v_3 & = &x_3 + a\rho & = &x_2 + b\rho\\
    v_4 & = &x_4 + c\lambda & = &x_3 + a\lambda\\
    v_5 & = &x_5 + a\mu & = &x_4 + c\mu \\
    v_6 & = &x_6 + b\rho & = &x_5 + a\rho \\
    v_7 & = &x_7 + c\mu  & =& x_6 + b\mu,
\end{alignat*}
and edges $e_1=e(x_1,a)$, $e_2=e(x_2, b)$, $e_3=e(x_3, a)$, $e_4=e(x_4, c)$, $e_5=e(x_5, a)$, $e_6=e(x_6, \rho)$, and $e_7=e(x_7, \mu)$ of $\mathcal{H}(A,\vec{\lambda})$. Note that once $a,b,c$ and $\lambda, \mu, \rho$ are chosen, then $v_1$ uniquely defines $v_2,\cdots, v_7$ and the $e_i$ form a cycle as long as all $x_i$ are distinct. In this case \eqref{a equation} reduces to 
\[
a(\lambda + \rho + \mu - \mu - \lambda - \rho) + b(\mu + \rho - \rho - \mu) + c(\lambda + \mu - \lambda - \mu) = 0,
\]
which holds trivially. Hence each vertex in $\mathcal{H}(A, \vec{\lambda})$ is in $\Omega(|A|^3)$ ``trivial" cycles of length $7$. For all $k\geq 8$, trivial cycles of length $k$ can similarly be found. 

\subsection{Avoiding cycles by genus type}\label{sec:avoiding cycles by genus type}

In some applications of this method, not every $k$-cycle in $\mathcal{H}( A , \vec{ \lambda} )$ is considered.  This includes Alon's notable paper on testing subgraphs \cite{alon} and Ge and Shangguan \cite{Ge} results on hypergraph Tur\'{a}n problems.  In these papers, the $k$-cycles that are avoided are those that have vertices in $k$ distinct parts.  When this restriction is made, 
the payoff can be that all forbidden cycles correspond to genus 1 equations, and so the critical obstacle that was unforeseen at the time that the claim from \cite{TV} was made does not cause an issue.


\section{Girth 5 and 6 with uniformity at least 7}\label{sec: codes}

\subsection{Tools from Coding Theory}

\label{subsec:tools-coding-theory} 

A linear code is a subspace of a vector space and hence can be described as the null space to a parity-check matrix. As a vector in the null space of a matrix naturally leads to an equation in terms of its columns, this will be useful for us. 

A subspace $\mathcal{C}$ of $\mathbb{F}_q^n$ of dimension $k$ is called an $[n, k]$ linear code. An $\ell\times n$ matrix $H$ for which $\mathcal{C}$ equals the nullspace of $H$ is called a parity-check matrix for $\mathcal{C}$.  The Hamming distance between any pair of vectors (called codewords) $\vec{v}_1, \vec{v}_2$ in $\mathcal{C}$  is $d(\vec{v}_1, \vec{v}_2)$, the number of positions in which they differ. The minimum distance of a code is defined as
\[ d(\mathcal{C})=\min_{\vec{v}_1,\vec{v}_2\in \mathcal{C}, \vec{v}_1\neq \vec{v}_2}\{d(\vec{v}_1, \vec{v}_2)\}.\]

Because the code is linear we have that $d(\vec{v}_1, \vec{v}_2) = d(\vec{v}_1- \vec{v}_2, \vec{0})$, and so distance can be understood by linear combinations of the columns of the parity check matrix which sum to $\vec{0}$. Recall that an $[n,k,d]_q$ code is an $[n,k]$ linear code in $\mathbb{F}_q^n$ with minimum distance $d$. We use the following folklore result which is standard in coding theory and relates the minimum distance of a code to the linear independence of subsets of the columns of a parity-check matrix for $\mathcal{C}$. 

\begin{proposition}[Folklore, see e.g. \cite{huffman2010fundamentals}]\label{prop: folklore}
   A linear code has minimum distance $d$ if and only if its parity check matrix has a set of $d$ linearly dependent columns, but any set of $d-1$ columns is linearly independent. 
\end{proposition}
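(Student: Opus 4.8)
The plan is to translate everything into the language of columns of the parity-check matrix $H$, using the standard fact that for a linear code the minimum distance equals the minimum Hamming weight of a nonzero codeword. Indeed, by linearity of $\mathcal{C}$, for distinct codewords $\vec{v}_1,\vec{v}_2$ we have $d(\vec{v}_1,\vec{v}_2) = d(\vec{v}_1-\vec{v}_2,\vec{0})$, and $\vec{v}_1-\vec{v}_2$ ranges over all nonzero codewords, so $d(\mathcal{C}) = \min\{\mathrm{wt}(\vec{c}) : \vec{c}\in\mathcal{C},\ \vec{c}\neq\vec{0}\}$. Write $\vec{h}_1,\dots,\vec{h}_n$ for the columns of $H$. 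The key dictionary is: a vector $\vec{c}\in\mathbb{F}_q^n$ with support $S = \{i : c_i\neq 0\}$ lies in $\mathcal{C}$ if and only if $H\vec{c}^T = \sum_{i\in S} c_i \vec{h}_i = \vec{0}$, i.e. if and only if the columns indexed by $S$ satisfy a linear dependence whose coefficients are all nonzero on $S$.

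First I would prove the forward direction. Suppose $d(\mathcal{C}) = d$. Pick a nonzero codeword $\vec{c}$ of weight exactly $d$, with support $S$, $|S| = d$. Then $\sum_{i\in S} c_i\vec{h}_i = \vec{0}$ with the $c_i$ nonzero, so the $d$ columns $\{\vec{h}_i : i\in S\}$ are linearly dependent. On the other hand, if some set of $d-1$ columns, say indexed by $T$ with $|T| = d-1$, were linearly dependent, there would be scalars $\alpha_i$, not all zero, with $\sum_{i\in T}\alpha_i\vec{h}_i = \vec{0}$; setting $c_i = \alpha_i$ for $i\in T$ and $c_i = 0$ otherwise yields a nonzero codeword of weight at most $d-1$, contradicting $d(\mathcal{C}) = d$. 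Hence every set of $d-1$ columns of $H$ is linearly independent.

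Next I would prove the converse. Assume $H$ has a set of $d$ linearly dependent columns while every $d-1$ of its columns are independent. Take the dependent set, indexed by $S$ with $|S| = d$, and a nontrivial relation $\sum_{i\in S}\alpha_i\vec{h}_i = \vec{0}$. I claim every $\alpha_i$ is nonzero: if $\alpha_{i_0} = 0$ for some $i_0\in S$, the same relation would exhibit a dependence among the $d-1$ columns indexed by $S\setminus\{i_0\}$, contradicting the hypothesis. So, defining $\vec{c}$ by $c_i = \alpha_i$ for $i\in S$ and $c_i = 0$ otherwise, we get $H\vec{c}^T = \vec{0}$, so $\vec{c}\in\mathcal{C}$, and $\mathrm{wt}(\vec{c}) = d$; thus $d(\mathcal{C}) \leq d$. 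Conversely, if there were a nonzero codeword $\vec{c}'$ of weight $w \leq d-1$ with support $T$, then $\sum_{i\in T}c'_i\vec{h}_i = \vec{0}$ would be a nontrivial dependence among at most $d-1$ columns, again contradicting the hypothesis; hence $d(\mathcal{C}) \geq d$, and therefore $d(\mathcal{C}) = d$.

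There is no real obstacle here — the statement is genuinely folklore — so the only thing to be careful about is the bookkeeping step showing that a minimal linear dependence among the chosen columns must have all coefficients nonzero (which is exactly where the ``every $d-1$ columns independent'' hypothesis is used), and the passage back and forth between the support/weight of a codeword and the index set of a set of columns. All of this is immediate from the dictionary above, so the write-up is short.
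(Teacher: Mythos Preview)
Your proof is correct and complete; the paper itself does not supply a proof of this proposition, instead citing it as folklore with a reference to a standard coding theory text. Your argument is exactly the standard one that such a reference would contain.
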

   
We make use of Dumer's construction \cite{dumerdouble} of linear codes exceeding the BCH bound. 

\begin{theorem}[\cite{dumerdouble}, Theorem 7]\label{parity check dimensions}
  There exist codes of minimum distance at least 5 with parity-check matrix $H$, where $H$ has  
  $2m+\lceil \frac{m}{3}\rceil+1$ rows and $q^m$ columns, where $q$ is a power of an odd prime.
\end{theorem}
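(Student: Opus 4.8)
The plan is to reduce to a statement about the columns of a parity--check matrix and then exhibit such a matrix explicitly. By Proposition~\ref{prop: folklore}, a linear code over $\mathbb{F}_q$ has minimum distance at least $5$ exactly when every $4$ columns of one of its parity--check matrices are linearly independent over $\mathbb{F}_q$; so it suffices to produce an $\mathbb{F}_q$--matrix $H$ with $q^m$ columns and $2m+\lceil m/3\rceil+1$ rows in which no $4$ columns are $\mathbb{F}_q$--dependent.

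I would index the $q^m$ column positions by the field $\mathbb{F}_{q^m}$, viewed as an $m$--dimensional $\mathbb{F}_q$--vector space, and take the column at $x\in\mathbb{F}_{q^m}$ to be the block vector $(1,\,x,\,\psi(x))$, where the constant $1$ uses one row (so that the position $x=0$ does not become a zero column), $x$ uses $m$ rows, and $\psi\colon\mathbb{F}_{q^m}\to W$ maps into an $\mathbb{F}_q$--space $W$ with $\dim_{\mathbb{F}_q}W=m+\lceil m/3\rceil$, for a total of $2m+\lceil m/3\rceil+1$ rows. The first, ``BCH--type'' component of $\psi$ is the cube $x\mapsto x^{3}\in\mathbb{F}_{q^m}$. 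A hypothetical codeword of weight at most $4$ then yields scalars $c_1,\dots,c_4\in\mathbb{F}_q$, not all zero, and positions $x_1,\dots,x_4$ with
\[
\sum_i c_i=0,\qquad \sum_i c_i x_i=0,\qquad \sum_i c_i x_i^{3}=0,
\]
together with the equations coming from the remaining coordinates of $\psi$. When $\gcd(3,q^{m}-1)=1$ the cube map is a bijection, and a Newton--identity / elementary--symmetric--function argument (the same one showing that primitive double--error--correcting BCH codes have distance $5$) already forces two of the $x_i$ to coincide once the relation is consolidated; here only $2m+1$ rows are used and the remaining $\lceil m/3\rceil$ rows are zero.

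The substantive case is $\gcd(3,q^{m}-1)=3$, in which $x\mapsto x^{3}$ is three--to--one onto the group of cubes and the bare pair $(x,x^{3})$ does \emph{not} give distance $5$: choosing $c_1+c_2=0$, $c_3+c_4=0$ and $x_2=\omega x_1$, $x_4=\omega' x_3$ with $\omega^{3}=(\omega')^{3}=1$ produces a genuine weight--$4$ relation. The purpose of the extra $\lceil m/3\rceil$ coordinates of $\psi$ (beyond the cubic block) is exactly to annihilate such residual relations: one picks an almost--linear map $\eta\colon\mathbb{F}_{q^m}\to\mathbb{F}_q^{\lceil m/3\rceil}$ coming from a subfield/quotient structure of $\mathbb{F}_{q^m}$ of $\mathbb{F}_q$--dimension $\lceil m/3\rceil$, arranged so that after adjoining the equations $\sum_i c_i\eta(x_i)=0$ the whole system splits into Vandermonde--type subsystems, each involving at most two distinct positions and each non--degenerate, and these force equalities among the $x_i$. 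This is Dumer's ``algebraic varieties'' construction (the columns are points of a low--degree surface), and the dimension bookkeeping for $\eta$ supplies precisely the claimed $\lceil m/3\rceil$ extra rows; in every case we then conclude that no $4$ columns of $H$ are $\mathbb{F}_q$--dependent, hence $d(\mathcal{C})\ge 5$.

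I expect the main obstacle to be twofold. First, over a non--binary alphabet the $c_i$ are arbitrary nonzero scalars, so the ``subtract two cyclically shifted copies of the relation'' trick that makes the binary analysis immediate must be replaced by an honest rank computation for a structured matrix whose rows mix $1$, $x_i$, $x_i^{3}$, and $\eta(x_i)$. Second, and more delicate, one must write down the auxiliary map $\eta$ concretely and verify both that its image has $\mathbb{F}_q$--dimension exactly $\lceil m/3\rceil$ and that it provably kills every weight--$\le 4$ relation surviving the cubic coordinate; this separation property is the technical core of the construction and is where the algebraic--geometry input is genuinely used.
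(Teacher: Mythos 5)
The paper does not actually prove this statement; Theorem~\ref{parity check dimensions} is cited verbatim from Dumer's paper, and the text that follows merely \emph{describes} the structure of $H$ (the block $\tilde H$ is a parity-check matrix of an extended BCH code with $d\geq 4$ built from the columns $(1,x,x^2)$, and $P$ is a $\lceil m/3\rceil$-row block coming from a norm map) and then gives Example~\ref{example:F25-paritycheck-matrix}. So the task is really to assess whether your blind reconstruction of Dumer's construction and proof is sound, and it is not.

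The decisive error is the choice of $x\mapsto x^3$ as the ``BCH-type'' coordinate. Over $\mathbb{F}_q$ with $q$ odd, the map into $(1,\,x,\,x^3)$ does \emph{not} give a code of distance $\geq 4$, let alone $\geq 5$ when the cube map is a bijection. Take $x_1=0$, $x_2=1$, $x_3=-1$ (distinct since $q$ is odd). The corresponding columns $(1,0,0)$, $(1,1,1)$, $(1,-1,-1)$ satisfy $c_1(1,0,0)+c_2(1,1,1)+c_3(1,-1,-1)=0$ with $c_2=c_3$ and $c_1=-2c_2\neq 0$, so three columns are already $\mathbb{F}_q$-dependent and the distance is at most $3$. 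The Newton-identity argument you invoke is the one for \emph{binary} double-error-correcting BCH codes, where conjugacy over $\mathbb{F}_2$ promotes the designed root $\alpha^3$ to the consecutive run $\alpha,\alpha^2,\alpha^3,\alpha^4$; over odd $q$ no such promotion occurs, and $x,x^3$ are not consecutive, so the BCH bound gives nothing beyond distance $3$. Dumer's construction (visible in the paper's example, where $H_1$ has rows $1,\alpha^i,\alpha^{2i}$) uses the conic coordinate $x\mapsto x^2$: any three points $(1,x,x^2)$ on the rational normal curve are linearly independent by Vandermonde, so the $(2m{+}1)$-row block alone already guarantees $d\geq 4$, and only the remaining step from $4$ to $5$ is charged to the $\lceil m/3\rceil$ norm rows.

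Separately, even granting the correct quadratic coordinate, the part of the argument you label as the ``technical core''---the explicit definition of the auxiliary norm-type map, the verification that its image has $\mathbb{F}_q$-dimension exactly $\lceil m/3\rceil$, and the proof that it eliminates every weight-$\leq 4$ dependence surviving the BCH block---is left entirely unproved. That is precisely the substance of Dumer's Theorem~7, so the proposal as written is not a proof but an announcement of the difficulty.
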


The parity-check matrix $H$ has the form $\displaystyle H= \begin{bmatrix} \tilde{H} \\\hline P \end{bmatrix}$
where $\tilde{H}$ is a parity-check matrix for an extended BCH code with parameters $[q^m, k= q^m-2m-1, d\geq 4]$, and $P$ is an $\lceil \frac{m}{3} \rceil \times q^m$ matrix whose columns come from the norm equation on $x=(\tau_1, \tau_2, \ldots, \tau_m)\in \mathbb{F}_q^m$. The construction is presented generally in \cite{dumerdouble};  we give the following example to illustrate. 

\begin{example} \label{example:F25-paritycheck-matrix} 
We construct a small example of the matrix $H$ described above, for $q=5$ and $m=2$. 
Use $\mathbb{F}_{25}\cong \mathbb{F}_5[x]/\langle x^2+2x+3\rangle$. Let $\alpha$ be a root of $x^2+2x+3$ over $\mathbb{F}_5$. Note that $\alpha$ is  primitive in $\mathbb{F}_{25}$ and generates $\mathbb{F}_{25}^*$. 
We identify $\mathbb{F}_{25}$ with $\mathbb{F}_5^2$ by the correspondence that takes $\alpha^n=\alpha X+Y$ to $(X,Y)\in \mathbb{F}_5^2$. In particular, $\alpha^0=1$ corresponds to $(0,1)\in \mathbb{F}_5^2$. Thus we obtain a matrix $H_1$ of the form 
\[ H_1=\begin{bmatrix}
    1&1&1&\cdots & 1&1 \\ 
    1 & \alpha&\alpha^2&\cdots & \alpha^{23}&\alpha^{24} \\ 
    1 & \alpha^2&\alpha^4&\cdots & \alpha^{46}&\alpha^{48}
\end{bmatrix}_{\mathbb{F}_{25}}\longrightarrow  \begin{bmatrix}
    0&0&0&\cdots & 0 &0\\ 
    1&1&1&\cdots & 1 &1\\
    0 & 1&3&\cdots & 3 &0\\ 
    1 & 0&2&\cdots & 1 &1\\
    0 & 3&4&\cdots & 3 &0\\
    1 & 2&2&\cdots & 1 &1\\
\end{bmatrix}_{\mathbb{F}_5^2}
\]

Removing the first redundant row gives a matrix $\tilde{H}$ with $2m+1$ rows and $q^m$ columns over $\mathbb{F}_q$. 

The next step is to append $P$, a $1\times 25$ matrix. For a given element $i$ of $\mathbb{F}_{25}$ indexing the $q^m$ columns, $P(i)$ is defined in \cite{dumerdouble} as a vector of length $\lceil \frac{m}{3}\rceil$ determined by the three-norm of triples within $i$. For this example, since $m=2$, $P$ can be defined as 
\[ P(i)=N_2(i)=i^{1+5} \in \mathbb{F}_5, \text{ for } i=1, \alpha, \ldots, \alpha^{23},0.\]
Therefore the matrix $P$ is
\[ P=\begin{bmatrix}
    1 & 3 & 4 & \cdots & 2& 0 
\end{bmatrix}.\]

Finally, the parity-check matrix with size as in Theorem~\ref{parity check dimensions} is
\[ H=\begin{bmatrix}
    1&1&1&\cdots & 1 &1\\
    0 & 1&3&\cdots & 3 &0\\ 
    1 & 0&2&\cdots & 1 &1\\
    0 & 3&4&\cdots & 3 &0\\
    1 & 2&2&\cdots & 1 &1\\
    1 & 3 & 4 & \cdots & 2& 0 
\end{bmatrix}_{6\times 25}.\]


    
\end{example}

\begin{theorem}[\cite{dumerdouble}, Cor. 6] \label{d6 parity check dimensions}
 The code $V_6^m$ 
 has minimum distance at least 6 and length $q^{\lfloor 5(m-1)/6\rfloor}$, for $m$ even. There is a parity check matrix for $V_6^m$ with $\frac{5}{2}m$ rows and $q^{\lfloor 5(m-1)/6\rfloor}$ columns.     
\end{theorem}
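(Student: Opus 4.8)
The plan is to produce the parity-check matrix explicitly and then apply Proposition \ref{prop: folklore}: it suffices to exhibit a matrix over $\mathbb{F}_q$ with $\tfrac52 m$ rows and $q^{\lfloor 5(m-1)/6\rfloor}$ columns in which every five columns are linearly independent. The natural starting point is the distance-$5$ construction of Theorem \ref{parity check dimensions}, whose rows form an all-ones row, a ``linear'' block sending $x\mapsto x$ and a ``quadratic'' block sending $x\mapsto x^{2}$ (each contributing $m$ rows over $\mathbb{F}_q$ after fixing an isomorphism $\mathbb{F}_{q^m}\cong\mathbb{F}_q^m$), together with a short block $P$ of $\lceil m/3\rceil$ rows whose entries are partial-norm values of $x$. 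To obtain distance $6$ one augments $P$ by a slightly larger block of norm-type coordinates so the total redundancy is $\tfrac52 m$, and \emph{simultaneously} shrinks the column index set from all of $\mathbb{F}_{q^m}$ to the affine variety $\mathcal V$ cut out by the norm equations defining the augmented $P$. Restricting to $\mathcal V$ is what makes these few extra coordinates do real work: the parity conditions only need to distinguish points lying \emph{on} $\mathcal V$.

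Next I would determine the length. The variety $\mathcal V$ is an intersection of fibres of norm maps $N\colon\mathbb{F}_{q^m}\to\mathbb{F}_{q^{m/j}}$ for appropriate divisors $j\mid m$ (this is where the hypothesis that $m$ is even is used), and counting its $\mathbb{F}_q$-points --- elementarily, since a norm map is a surjective group homomorphism with equal-size fibres, or via Weil estimates if a cleaner count is preferred --- yields $|\mathcal V|=q^{\lfloor 5(m-1)/6\rfloor}$ after deleting a bounded number of degenerate points. This is routine bookkeeping, but the exponent has to come out exactly, which is why the bound is stated with a floor.

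The substance of the proof is checking that no five columns are linearly dependent. Suppose $x_1,\dots,x_5\in\mathcal V$ and scalars $c_1,\dots,c_5\in\mathbb{F}_q$, not all zero, satisfy $\sum_j c_j\mathbf{h}_{x_j}=\mathbf{0}$, where $\mathbf{h}_x$ denotes the column at $x$. Reading off the three linear blocks gives $\sum_j c_j=0$, $\sum_j c_j x_j=0$, and $\sum_j c_j x_j^{2}=0$ as identities in $\mathbb{F}_{q^m}$ (valid because the $c_j$ lie in $\mathbb{F}_q$). These are precisely the conditions defining a designed-distance-$4$ BCH code; with five points they admit nontrivial solutions, so they alone do not suffice. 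The norm coordinates are then brought in: since every $x_j$ lies on $\mathcal V$, matching the norm coordinates of the $x_j$ forces a multiplicative relation which, combined with the three linear relations, forces two of the $x_j$ to coincide. A shorter relation among at most four distinct points then remains, which is excluded by iterating the argument --- now the distance-$5$ portion of the construction already suffices, since any four of its columns are independent.

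I expect this last step to be the main obstacle: showing that a norm block of only about $m/2$ check symbols nonetheless breaks every spurious dependence among five columns drawn from $\mathcal V$. I would isolate it as a self-contained lemma --- informally, ``if $x_1,\dots,x_5\in\mathcal V$ are distinct and $\sum c_j=\sum c_j x_j=\sum c_j x_j^{2}=0$ with all $c_j\in\mathbb{F}_q^{\times}$, then the norm coordinates of the $x_j$ cannot all agree'' --- and prove it by manipulating the norm identities (using that $N$ is multiplicative and that membership in $\mathcal V$ fixes its value) down to a polynomial equation over $\mathbb{F}_{q^m}$ of degree too small to have the required number of distinct roots. The precise point count for $\mathcal V$ is a secondary technical matter; both it and the linear-independence argument are carried out in \cite{dumerdouble}.
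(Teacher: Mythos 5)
The paper does not prove this statement. Theorem \ref{d6 parity check dimensions} is quoted verbatim from Dumer's paper (\cite{dumerdouble}, Cor.~6), and the only surrounding commentary in the text is the one-sentence remark that $V_6^m$ is a punctured BCH code and the subsequent remark that Dumer's raw parity-check matrices contain redundant rows which must be pruned to obtain the stated row count. There is therefore no paper proof against which to compare your reconstruction: the ``proof'' on the paper's side is the citation itself.

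As a reconstruction of Dumer's argument, your sketch is internally inconsistent at the central step. You propose to (i) append additional norm-type rows to $P$ to reach redundancy $\tfrac52 m$, and (ii) restrict the column index set to the affine variety $\mathcal V$ ``cut out by the norm equations defining the augmented $P$.'' But if $\mathcal V$ is precisely the locus where those norm coordinates take a fixed value, then the appended rows are constant on the restricted column set --- hence scalar multiples of the all-ones row and \emph{redundant}, not parity checks that ``do real work.'' This is the opposite of the effect you want and directly contradicts your claim that restriction is what makes the extra coordinates useful. The paper's own remark that Dumer's construction ``has redundant rows'' which must be deleted to reach $\tfrac52 m$ suggests the bookkeeping really does go in the other direction: one starts from a larger redundancy and prunes. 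Beyond this, your sketch defers both substantive ingredients --- the exact point count $|\mathcal V| = q^{\lfloor 5(m-1)/6\rfloor}$ and the $5$-column linear independence --- to \cite{dumerdouble}, so even setting aside the conceptual issue, what you have written is not a proof but a plausibility argument that ultimately rests on the same citation the paper uses. That is fine as a signpost to the reference, but it should not be presented as if it supplies a proof that the paper omits.
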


The code $V_6^m$ is a punctured BCH code (see e.g. \cite{macwilliams1977theory}) designed to   
retain certain  codeword positions that result in the parameters given in Theorem~\ref{d6 parity check dimensions}. 


\begin{remark}
    The constructions given for Theorems~\ref{parity check dimensions} and \ref{d6 parity check dimensions} 
    given in \cite{dumerdouble} have redundant rows. Retaining only a linearly independent set of rows  yields a parity-check matrix with the sizes given.
\end{remark}

\subsection{Ruzsa-Szemer\'{e}di Hypergraphs from Dumer's BCH Codes}

  Our next task is to prove Theorem \ref{thm: graphs from codes}. Let $\mathcal{C}$ be a linear code of distance $d\in \{5,6\}$ over $\mathbb{F}_q$ and let $A$ be the set of columns of its parity check matrix. Let $\vec{\lambda}$ be a Sidon set of size $r$ in $\mathbb{F}_q$. We assume that $q$ is fixed but is large enough so that it contains such a Sidon set. Let $\mathcal{H} = \mathcal{H}(A,\vec{\lambda})$. Here we are using $S = \mathbb{F}_q$ and $R = R' = \mathbb{F}_q^t$ where $t$ is the number of rows in the parity check matrix. We will show that $\mathcal{H}$ has girth at least $d$.

\subsection{Proof of girth}\label{sec: coding theory girth}
By Proposition \ref{prop: folklore}, we have that there is no nontrivial linear combination of less than $d$ vectors in $A$ which sum to $\vec{0}$. Proposition \ref{prop: linear} gives us that $\mathcal{H}$ has no $2$-cycles. We now consider cycles of length $3$, $4$, and $5$. 

\medskip \noindent {\em No $3$-cycles:} Assume that there is a $3$-cycle with vertices $v_1, v_2, v_3$ and edges $E_1$, $E_2$, $E_3$ with the same notation as above. Then \eqref{a equation} gives that 
\[
(\lambda_{i_1} - \lambda_{i_2})a_1 + (\lambda_{i_2} - \lambda_{i_3})a_2 + (\lambda_{i_3} - \lambda_{i_1})a_3 = 0.
\]
Because we have that  $i_3\not = i_1$ and $i_\ell \not = i_{\ell+1}$ for all $1\leq \ell \leq 2$, all of the coefficients of the $a_i$ are nonzero. Since sets of at most $3$ vectors in $A$ cannot be in nontrivial linear combination summing to $\vec{0}$, we must have that $a_1 = a_2 = a_3$. But this implies that $x_1 = x_2 = x_3$ and so $E_1 = E_2 = E_3$, a contradiction.

\medskip \noindent {\em No $4$-cycles:} Assume there is a $4$-cycle with vertices $v_1, v_2 , v_3, v_4$ and edges $E_1,E_2 , E_3, E_4$. Then by \eqref{a equation}, we have 
\[
(\lambda_{i_1} - \lambda_{i_4})a_1 + (\lambda_{i_2} - \lambda_{i_1})a_2 + (\lambda_{i_3} - \lambda_{i_2})a_3 + (\lambda_{i_4} - \lambda_{i_3})a_4 = 0.
\]

Because we have that  $i_4\not = i_1$ and $i_\ell \not = i_{\ell+1}$ for all $1\leq \ell \leq 3$, all of the coefficients of the $a_i$ are nonzero. By the condition on $A$ this must again be a trivial linear combination. Note that if $a_i = a_{i+1}$ for any $i$ (modulo $4$), we have that $E_i = E_{i+1}$, a contradiction. Hence we must have that $a_1 = a_3$ and $a_2 = a_4$, and furthermore that $\lambda_{i_1} - \lambda_{i_4} + \lambda_{i_3} - \lambda_{i_2} = 0$. Since $\vec{\lambda}$ is a Sidon set, we must have that $\{i_1, i_3\} = \{i_2, i_4\}$. But then we have that $i_\ell = i_{\ell+1}$ for some $\ell$ (modulo $4$), a contradiction. 

\medskip \noindent {\em No $5$-cycles:} Assume there is a $5$-cycle with vertices $v_1,\cdots, v_5$ and $E_1,\cdots, E_5$. Then by \eqref{a equation}, we have 
\[
(\lambda_{i_1} - \lambda_{i_5})a_1 + (\lambda_{i_2} - \lambda_{i_1})a_2 + (\lambda_{i_3} - \lambda_{i_2})a_3 + (\lambda_{i_4} - \lambda_{i_3})a_4 + (\lambda_{i_5 }- \lambda_{i_4})a_5 = 0.
\]
Because we have that  $i_5\not = i_1$ and $i_\ell \not = i_{\ell+1}$ for all $1\leq \ell \leq 4$, all of the coefficients of the $a_i$ are nonzero. Now, if $d\geq 6$, then there can be no nontrivial combination of at most $5$ vectors in $A$ summing to $\vec{0}$, and hence the equation must be trivial. This implies that at least $3$ of the $a_i$ must be equal, which implies that $i_\ell = i_{\ell+1}$ for some $\ell$ (modulo $5$), a contradiction.

\subsection{Proof of Theorem \ref{thm: graphs from codes}}

{Fix an integer $r \geq 3$.}
First let $d=5$. Let $q$ be a prime large enough so that there exists a Sidon set $ \vec{\lambda} = \{\lambda_1,\cdots, \lambda_r\}$ in $\mathbb{F}_q$. For example one may choose greedily as explained in Section \ref{sec: setup} and find such a set if $(r-1)^3 < q$.  Let $m$ be the largest integer satisfying 
\[
r\cdot q^{2m+\lceil \frac{m}{3}\rceil} \leq N. 
\]
We have that $m = \frac{3}{7}\log_q N - O_r(1)$. By Theorem \ref{parity check dimensions}, there is a linear code of distance $5$ with a parity check matrix that has $2m + \lceil \frac{m}{3}\rceil$ rows and $q^m$ columns. Define $\mathcal{H}_5 = \mathcal{H}(A, \vec{\lambda})$ where $A$ is the set of columns of this parity check matrix. Then this hypergraph on at most $N$ vertices is $|A| = q^m$ regular and since $q$ is depending only on $r$, we have that 
\[
q^m = q^{\frac{3}{7}\log_q N - O_r(1)} \geq c_r N^{3/7}.
\]

By Section \ref{sec: coding theory girth}, $\mathcal{H}_5$ has girth $5$, implying this case of Theorem \ref{thm: graphs from codes}. 

Similarly, for $d=6$, let $m$ be the largest even integer with $r\cdot q^{5m/2} \leq N$, so $m = \frac{2}{5} \log_q N - O_r(1)$. By Theorem \ref{d6 parity check dimensions}, there is a linear code with distance $6$ which has a parity check matrix with $\frac{5}{2}m$ rows and $q^{\lfloor 5(m-1)/6\rfloor}$ columns. Define $\mathcal{H}_6 = \mathcal{H}(B, \vec{\lambda})$ where $B$ is the set of columns of this matrix. Then $\mathcal{H}_6$ is a hypergraph on at most $N$ vertices which is $|B| = q^{\lfloor 5(m-1)/6\rfloor}$ regular, and thus we have average degree at least 
\[
q^{\lfloor 5(m-1)/6\rfloor} = c'_r N^{1/3}.
\]

By Section \ref{sec: coding theory girth}, $\mathcal{H}_6$ has girth $6$, proving the theorem.

\section{Girth 5 and uniformities 4, 5, and 6}\label{sec: ruzsa method}

\subsection{Tools from Number Theory}

Now we discuss our approach using number theory.  The general strategy is the same in that we use Proposition \ref{prop:cycle equations} and avoid cycles by avoiding solutions to \eqref{a equation}. 
In this construction, we will use the results and methods of Ruzsa \cite{Ruzsa}.  We note that we work with $R = \mathbb{F}_p \times \mathbb{F}_p$ whereas Ruzsa phrased this construction over the integers, but one can easily see that Ruzsa is ``actually" working in $\mathbb{F}_p \times \mathbb{F}_p$.  We phrase it this way because it makes the details clearer in our opinion.

Using this method is more involved than with a code of minimum distance $d$ where by Proposition \ref{prop: folklore} we automatically avoid all nontrivial equations with at most $d-1$ variables. Before getting into the details, we outline the strategy and make a special note of where an obstacle arises while using Ruzsa's ideas. We show how to overcome this obstacle when $r\in \{4,5,6\}$, {and it is an open problem if this same approach is possible for larger $r$.}   

The initial thought behind the claimed lower bound on 
$\textup{ex}_r ( N , \mathcal{C}_{< 5} )$ from \cite{TV} is that if the entries of $\vec{ \lambda}$ are distinct
and form a Sidon set, then a natural modification of the construction from \cite{Ruzsa} would give a set $A$ of size $N^{1/2 - o(1)}$ that avoids non-trivial solutions to 2, 3, and 4-cycle equations determined by $\vec{\lambda}$. 
 As discussed in Section \ref{sec: setup}, this requires avoiding families of genus 1 and genus 2 equations.  The known methods to construct sets avoiding solutions to genus 1 and genus 2 equations are very different.  Ruzsa's paper \cite{Ruzsa} makes contributions to both. For the Behrend equation, Ruzsa observes (a similar modification appears in \cite{alon}) that Behrend's original argument extends to any equation of the form 
\[
\beta_1 X_1 + \beta_2 X_2 + \dots + \beta_k X_k = 
(\beta_1 + \beta_2 + \dots + \beta_k)X_{k+1}.
\]
where the $\beta_i$'s are positive integers. This part of Ruzsa's method goes through easily for us as well.
By Proposition \ref{prop: linear}, we will not see $2$-cycles in our construction. Cycles of length $3$ correspond to solutions to an equation in $3$ variables with coefficients bounded by a constant depending only on the $\lambda_i$'s.  Because such a cycle must be genus 1 we can use Lemma \ref{lemma:multiple Behrend} to avoid them. 

The more difficult part arises when we must deal with 
multiple genus 2 equations coming from different 4-cycles.  
 While \cite{Ruzsa} gives a method for constructing a set with 
 many elements and no solution to a single genus 2 equation of type $(2,2)$, 
 there is a subtlety that surfaces when one tries to extend this method 
 to families of these types of equations. The subtlety arises as follows.

Cycles of length $4$ correspond to an equation in $4$ variables over $\mathbb{F}_p^2$.  This equation is equivalent to a system of two equations in 4 variables 
over $\mathbb{F}_p$. Following \cite{Ruzsa}, algebraic manipulation shows that if a solution to the system exists, then one can eliminate a variable and obtain 
a solution to a different equation over $\mathbb{F}_p$ that is in 3 variables.   
Crucially though, the coefficients in this new equation are determined in part by taking a square root in $\mathbb{F}_p$ and hence, {\em we do not have control of their size as integers}.  {Thus, we are led to consider equations} of the form $aX + bY = (a+b)Z$ where $a,b$ can be any field elements. To our knowledge, it is not known whether or not a set of size $p^{1-o(1)}$ avoiding such an equation may be found in $\mathbb{F}_p$.  This is exactly the gap in the claim from \cite{TV} that a girth $5$ hypergraph with $N^{3/2-o(1)}$ edges exists for any uniformity. 
Howevever, one of our main contributions is to show that with the square products property, it is possible to overcome this issue when $r\in \{4,5,6\}$, proving Theorem \ref{thm: 456}.  We now proceed with the details.

As introduced in Section \ref{sec: setup} our approach using number theory requires the 
entries of $\vec{ \lambda}$ to be a Sidon set with the square products property.  
The largest such set we were able to find has 6 elements.  It was found using a computer search by David Davini \cite{David}.  

\begin{proposition}[Davini \cite{David}]\label{prop:size 6}
    The entries of the vector 
    \[
    \vec{\lambda} = ( 1,35,161,170,251,545)
    \]
    form a Sidon set of integers of size 6 with the square products property.  
        \end{proposition}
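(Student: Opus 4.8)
The statement is a purely arithmetic claim about a specific six-element vector, so the plan is to verify the two defining conditions separately: (i) that $\vec{\lambda} = (1,35,161,170,251,545)$ is a Sidon set in $\mathbb{Z}$, and (ii) that it has the square products property. For (i), I would appeal to the equivalent formulation in Definition~\ref{def:sidon-set}: it suffices to check that the $\binom{6}{2}=15$ positive pairwise differences $\lambda_j - \lambda_i$ ($i<j$) are all distinct. This is a finite check; one lists the differences and observes that no value is repeated. (Equivalently, one checks that all $6\cdot 5 = 30$ signed differences are distinct, or that the $30$ sums $\lambda_i + \lambda_j$ with $i\neq j$ determine $\{i,j\}$.)

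For (ii), the plan is to compute the set $\mathcal{S}(\vec{\lambda})$ of square-free parts and then exhibit an arithmetic progression of primes for which every element of $\mathcal{S}(\vec{\lambda})$ is a quadratic non-residue. Concretely, I would enumerate all ordered $4$-tuples $(i_1,i_2,i_3,i_4)$ of indices in $[6]$ with $i_1\neq i_2$, $i_2\neq i_3$, $i_3\neq i_4$, $i_4\neq i_1$, form the product $\pi_{i_1,i_2,i_3,i_4} = (\lambda_{i_1}-\lambda_{i_4})(\lambda_{i_2}-\lambda_{i_1})(\lambda_{i_3}-\lambda_{i_2})(\lambda_{i_4}-\lambda_{i_3})$, discard those that are negative or are perfect squares, and record the square-free part $s$ of each remaining one. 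As in the worked example with $(1,28,33,36,43)$, one expects many tuples to collapse to the same few square-free integers (and symmetries $(i_1,i_2,i_3,i_4)\mapsto(i_2,i_3,i_4,i_1)$ and reversal reduce the work), so $\mathcal{S}(\vec{\lambda})$ should be a small explicit set of square-free numbers. Once $\mathcal{S}(\vec{\lambda}) = \{s_1,\dots,s_m\}$ is in hand, I would use quadratic reciprocity to describe, for each $s_t$, the congruence classes modulo $4s_t$ (or modulo the relevant modulus) in which a prime $p$ must lie for $s_t$ to be a non-residue; intersecting these finitely many congruence conditions gives a single residue class (or union of classes) modulo $M := \mathrm{lcm}$ of the individual moduli that is nonempty and coprime to $M$, and Dirichlet's theorem on primes in arithmetic progressions then supplies infinitely many primes $p_1 < p_2 < \cdots$ in that class. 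Every element of $\mathcal{S}(\vec{\lambda})$ is a quadratic non-residue modulo each such $p_i$, which is exactly the square products property.

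The main obstacle is entirely computational rather than conceptual: there are on the order of a few hundred ordered $4$-tuples to process, so the risk is an arithmetic slip in computing the products $\pi_{i_1,i_2,i_3,i_4}$ or their square-free parts, or in the reciprocity bookkeeping when intersecting the residue conditions. In practice I would organize the computation by first tabulating the $\binom{6}{2}$ differences, then noting that each $\pi$ is (up to sign) a product of four of these differences whose indices form a closed walk on the complete graph $K_6$; this keeps the enumeration systematic. If $\mathcal{S}(\vec{\lambda})$ turns out to consist of a single square-free integer $s$ — as happened in the $(1,28,33,36,43)$ example, where $\mathcal{S} = \{7\}$ — the reciprocity step is immediate, and one simply cites the Chebotarev/Dirichlet input. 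The verification that the fifteen differences are pairwise distinct and that the square-free parts behave as claimed is the content of the cited computer search of Davini, so for the write-up I would present the key tables (the differences and the list of distinct values in $\mathcal{S}(\vec{\lambda})$) and the reciprocity conclusion, and defer the exhaustive tuple-by-tuple enumeration to that computation.
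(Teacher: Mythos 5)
Your proposal is correct and matches the paper's own approach: check the Sidon property by verifying distinctness of the pairwise differences, enumerate the ordered $4$-tuples to compute $\mathcal{S}(\vec{\lambda})$ (which the paper finds to be $\{3\cdot 7,\, 3\cdot 17\}$, tabulated in the Appendix), then use quadratic reciprocity and Dirichlet's theorem to exhibit an infinite sequence of primes (the paper uses $p \equiv 953 \pmod{1428}$) for which every element of $\mathcal{S}(\vec{\lambda})$ is a quadratic non-residue. The only minor discrepancy is your speculation that $\mathcal{S}(\vec{\lambda})$ might be a singleton as in the worked example; here it has two elements, but this does not affect the method.
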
 

It can be checked that the entries of $\lambda$ form a Sidon set and $\mathcal{S}( \vec{ \lambda} ) = \{3 \cdot  7 , 3 \cdot 17 \}$ (see the Appendix).  By the Law of Quadratic Reciprocity, if $p$ is a prime that is 
\begin{itemize}
    \item 5 or 7 modulo 12, then 3 is a quadratic non-residue modulo $p$,
    \item 1, 3, 9, 19, 25, or 27 modulo 28, then 7 is a quadratic residue modulo $p$,
    \item 1, 2, 4, 8, 9, 13, 15, or 16 modulo 17, then 17 is a quadratic residue modulo $p$.  
\end{itemize}
For such a prime, both $3 \cdot 7$ and $3 \cdot 17$ will be quadratic non-residues modulo $p$.  A prime $p$ that satisfies solves the system  
\begin{center}
    $x \equiv 5 ( \textup{mod}~12)$, ~~~$x \equiv 1 ( \textup{mod}~28)$, ~~~$x \equiv 1 ( \textup{mod}~17)$ 
\end{center}
will satisfy $p \equiv 953 ( \textup{mod}~1428)$.  By Dirichlet's Theorem on primes in Arithmetic Progressions, there are infinitely many such primes $p$.  
For such a prime $p$, both $3 \cdot 7$ and 
$3 \cdot 17$ are quadratic non-residues modulo $p$, so 
$\vec{\lambda}$ has the square products property.  

We will need our sequence of primes to be dense enough so that given $N$, we can choose a value of $rp^2$ that is close to $N$ and $rp^2 \leq N$.  To prove such a sequence exists, we use a quantitative form of the Prime Number Theorem in Arithmetic Progressions.

\begin{lemma}\label{lemma:PNT lemma}
Let $a$ and $m$ be positive integers that are relatively prime, and let $\epsilon >0$.  There is an $M_0$ such that for all $M > M_0$, there is a prime $p $ with $p \equiv a ( \textup{mod}~m)$ and 
$(1 - \epsilon) M \leq p \leq M$.   
  \end{lemma}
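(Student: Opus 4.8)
The plan is to deduce this immediately from the Prime Number Theorem in Arithmetic Progressions. Recall that for $\gcd(a,m)=1$, writing $\pi(x;m,a)$ for the number of primes $p \leq x$ with $p \equiv a \pmod{m}$, one has the asymptotic $\pi(x;m,a) \sim \frac{1}{\varphi(m)}\,\mathrm{Li}(x)$ as $x \to \infty$, and in particular $\pi(x;m,a) = (1+o(1))\frac{1}{\varphi(m)}\frac{x}{\log x}$. This is the only external input needed.

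First I would count the primes $p \equiv a \pmod{m}$ lying in the half-open interval $\bigl((1-\epsilon)M,\,M\bigr]$; this count is exactly $\pi(M;m,a) - \pi\bigl((1-\epsilon)M;m,a\bigr)$. Applying the asymptotic to each term, this difference equals $\frac{1}{\varphi(m)}\bigl(\mathrm{Li}(M) - \mathrm{Li}((1-\epsilon)M)\bigr) + o\!\left(\frac{M}{\log M}\right)$. Writing $\mathrm{Li}(M) - \mathrm{Li}((1-\epsilon)M) = \int_{(1-\epsilon)M}^{M} \frac{dt}{\log t}$ and using that $\log t = (1+o(1))\log M$ uniformly for $t \in [(1-\epsilon)M, M]$, this integral is $(\epsilon + o(1))\frac{M}{\log M}$. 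Hence the number of primes $p \equiv a \pmod{m}$ with $(1-\epsilon)M < p \leq M$ is $(1+o(1))\frac{\epsilon}{\varphi(m)}\frac{M}{\log M}$, which tends to infinity with $M$. In particular there is an $M_0 = M_0(a,m,\epsilon)$ such that this count is at least $1$ for all $M > M_0$, and any prime $p$ so produced satisfies $(1-\epsilon)M \leq p \leq M$ and $p \equiv a \pmod{m}$, which is the claim.

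There is no real obstacle here: the lemma is essentially a repackaging of PNT in arithmetic progressions into a form convenient for later choosing a prime $p$ with $rp^2$ just below $N$. The only point requiring a word of care is the elementary estimate $\mathrm{Li}(M) - \mathrm{Li}((1-\epsilon)M) \sim \epsilon M/\log M$, which follows from bounding $\log t$ between $\log((1-\epsilon)M)$ and $\log M$ on the range of integration; alternatively one can avoid the logarithmic integral altogether and apply the cruder asymptotic $\pi(x;m,a) = (1+o(1))\frac{x}{\varphi(m)\log x}$ directly to both endpoints, which yields a difference of the same order $\epsilon M/\log M$ after the identical interval estimate. Either route needs nothing beyond the classical theorem.
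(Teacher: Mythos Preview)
Your proof is correct and follows essentially the same approach as the paper: both count primes $p\equiv a\pmod m$ in $((1-\epsilon)M,M]$ via $\pi(M;m,a)-\pi((1-\epsilon)M;m,a)$, estimate $\mathrm{Li}(M)-\mathrm{Li}((1-\epsilon)M)\sim \epsilon M/\log M$, and conclude the count is eventually positive. The only cosmetic difference is that the paper invokes a quantitative error term $O(x/(\log x)^2)$ from a specific reference, whereas you use the classical asymptotic with error $o(x/\log x)$; either suffices.
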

  \begin{proof}
Let $\pi_{a,m} (x)$ be the number of primes $p $ with $p \equiv a ( \textup{mod}~m)$ and $p \leq x$.  Let $\textrm{Li}(x) = \int_2^x \frac{dt}{ \log t}$ and $\phi$ be the Euler phi function.  Using a result of 
\cite{Bennett}, for sufficiently large $x$ we have
\[
\left| \pi_{a,m}(x)  - \frac{\textrm{Li}(x) }{ \phi (m) } \right| 
< \frac{ C x}{ ( \log x )^2}
\]
where $C$ is a positive constant depending only on $m$.  Thus,
\begin{eqnarray*} 
\pi_{ a , m }( M )  - \pi_{a , m}( ( 1 - \epsilon ) M) & \geq & 
\frac{1}{ \phi (m) } \int_{ ( 1 - \epsilon )M }^{M}  \frac{dt}{ \log t} 
- \frac{2CM}{ ( \log ( ( 1 - \epsilon ) M )^2  } \\ 
& > & 
\frac{ \epsilon M}{ (m - 1) \log ( M ) } - \frac{3 cM}{ ( \log ( ( 1 - \epsilon ) M )^2}.
\end{eqnarray*}
 For large enough $M$, depending on $\epsilon$ and $m$, this quantity is positive.  Therefore, 
 there is a prime $p$ with $( 1- \epsilon ) M \leq p \leq M$ and 
 $p \equiv a ( \textup{mod}~m)$.  
    \end{proof}
    \medskip


\subsection{Ruzsa-Szemer\'{e}di Hypergraphs from Square Product Sidon Sets}\label{sec:4.2}

Let $r \in \{4,5,6 \}$.  
Let $\vec{\lambda} = ( \lambda_1 , \dots , \lambda_r) $ 
be a vector whose entries are positive integers that 
form a Sidon set with the square products property.  For $r=6$ such a set exists by 
Proposition \ref{prop:size 6}.  
Any subset of a Sidon set with the square products property is also a Sidon set with the square products property.  Thus, for $r \in \{4,5 \}$ one can restrict to the first $r$ entries of 
$\vec{ \lambda} = ( 1,35,161,170,251,545)$ from Proposition \ref{prop:size 6} to obtain the needed set.  

By Lemma \ref{lemma:multiple Behrend} with $C= 2 \lambda_r^2$ and $\ell = 3$, there are positive real numbers $M$, $\tau$, and $\gamma$ such that the following holds.  For any prime $p > M$, there is a set $B \subset \mathbb{F}_p$ with 
$|B| > \gamma p e^{ - \tau \sqrt{ \log p } } $,
and $B$ has only trivial solutions to each 
equation in $\mathcal{E}_{ 2 \lambda_r^2 , 3}$.  
Consider the sequence of all primes 
\[
 M < p_1 < p_2 < p_3 < \dots 
\]
where $p_i \equiv 953 ( \textrm{mod}~1428)$ for all $i$, and $p_1$ is the smallest such prime that is larger than $M$.   
Write $\mathcal{P}$ for this sequence.  
For a prime $p \in \mathcal{P}$, let 
\[
A = \{ (x,x^2) : x \in B \} \subset \mathbb{F}_p \times \mathbb{F}_p.
\]

 Define $\mathcal{H}( A , \vec{ \lambda} )$ to be
 the resulting $r$-uniform hypergraph.  This hypergraph 
 has $r p^2$ vertices and $p^2 |B| >  \gamma p^3 e^{ - \tau \sqrt{ \log p } }$ edges.  In Section \ref{sec: proof of girth number theory} we prove that $\mathcal{H}( A , \vec{ \lambda} )$ has girth $5$.


\subsection{Proof of girth}\label{sec: proof of girth number theory}

\medskip \noindent {\em No $3$-cycles:} Assume that there is a 3-cycle with edges 
$e(w_1 , x)$, $e(w_2,y)$, $e(w_3,u)$.  
By Proposition \ref{prop:cycle equations} there are indices $1 \leq i_1,i_2,i_3 \leq r$ with $i_1 \neq i_2$, $i_2 \neq i_3$, and $i_3 \neq i_1$, and 
\[
( \lambda_{i_2} - \lambda_{i_1} ) (x,x^2)
+ 
(\lambda_{i_3} - \lambda_{i_2} ) (y,y^2)
+ 
(\lambda_{i_1} - \lambda_{i_3} ) (u,u^2) 
=(0,0).  
\]
The first coordinate implies that there are elements $x,y,u \in B$ that form a solution to  
\[
\coeff{2}{1} X + \coeff{3}{2} Y + \coeff{1}{3}U = 0.
\]
This is a type $(2,1)$ equation whose coefficients 
are bounded by $2 \lambda_r^2$.  By Lemma \ref{lemma:multiple Behrend} it must be 
the case that $x=y=u$.  By Proposition \ref{prop:consecutive edges} the edges $e(w_1,x)$ and $e(w_2,y) $ are the same since $x = y$ and they share a vertex.  This is a contradiction so $\mathcal{H}(A , \vec{ \lambda })$ has no 3-cycles.  

\medskip \noindent {\em No $4$-cycles:} Assume that there is a 4-cycle with edges $e(w_1 , x)$, $e(w_2,y)$, $e(w_3,u)$, and $e(w_4,v)$. 
%
By Proposition \ref{prop:cycle equations} there are indices $1 \leq i_1,i_2,i_3,i_4 \leq r$ where $i_1 \neq i_2$, $i_2 \neq i_3$, $i_3 \neq i_4$, $i_4 \neq i_1$, and 
\[
( \lambda_{i_2} - \lambda_{i_1} ) (x,x^2)
+ 
(\lambda_{i_3} - \lambda_{i_2} ) (y,y^2)
+ 
(\lambda_{i_4} - \lambda_{i_3} ) (u,u^2) 
+ 
(\lambda_{i_1} - \lambda_{i_4} ) ( v,v^2) 
=(0,0).  
\]
From the first coordinate we have the equation 
  \begin{equation}\label{eq:first coordinate}
 \coeff{2}{1} x + \coeff{3}{2} y + \coeff{4}{3} u + \coeff{1}{4} v  = 0,
 \end{equation}
 and from the second,
 \begin{equation}\label{eq:second coordinate}
  \coeff{2}{1} x^2 + \coeff{3}{2} y^2 + \coeff{4}{3} u^2 + \coeff{1}{4} v^2  = 0.
 \end{equation}
If the product $\pi_{i_1,i_2,i_3,i_4} = 
 \coeff{2}{1} \coeff{3}{2} \coeff{4}{3} \coeff{1}{4}$ 
 is negative, then 
   \begin{equation*}
 \coeff{2}{1} X + \coeff{3}{2} Y + \coeff{4}{3} U + \coeff{1}{4} V  = 0
 \end{equation*}
 is a type $(3,1)$ equation with coefficients bounded 
 by $2 \lambda_r^2$.  By Lemma \ref{lemma:multiple Behrend} and (\ref{eq:first coordinate}), it must be the case that $x = y = u = v$.  This implies $e(w_1,x)$ and $e(w_2,y)$ are the same edge, a contradiction.

 Assume now that the product $\pi_{i_1,i_2,i_3,i_4}$ is a  positive integer.  Then exactly two of $\coeffnp{2}{1}$, $\coeffnp{3}{2}$, $\coeffnp{4}{3}$, $\coeffnp{1}{4}$ are positive integers.  Without loss of generality, the two cases we consider are  $\coeffnp{2}{1}, \coeffnp{3}{2} > 0$ and $\coeffnp{2}{1}, \coeffnp{4}{3} > 0$.
   
  \medskip
 \noindent
 \textbf{Case 1:} $\coeffnp{2}{1} > 0$, $\coeffnp{3}{2} > 0$
 
  \smallskip
For convenience of notation, let $a = \coeffnp{2}{1}$, 
$b = \coeffnp{3}{2}$, $c = - ( \coeffnp{4}{3} )$, and $d = - ( \coeffnp{1}{4} )$.
Then $a,b,c,d$ are all positive integers with $a+b = c+ d$.  With this notation, 
(\ref{eq:first 
 coordinate}) and (\ref{eq:second coordinate}) become
 \begin{equation}\label{eq:7.9}
 a x + by = c u + dv
 \end{equation}
 and 
 \begin{equation}\label{eq:7.10}
 ax^2 + by^2 = cu^2 + dv^2.
 \end{equation}
Following Ruzsa's proof of Theorem 7.3 in \cite{Ruzsa}, multiply (\ref{eq:7.10}) through by $a+b=c+d$ to get
\[
a^2 x^2 + ab y^2 + b a x^2 + b^2 y = c^2 u^2 + c d v^2 + dc u^2 + d^2 v^2.
\]
Subtracting the square of (\ref{eq:7.9}) leads to
 \begin{equation}\label{eq:7.13}
 ab ( x- y )^2 = cd ( u - v)^2.
 \end{equation}
 
  If $x = y$, then since $ab \neq 0$ and $cd \neq 0$, (\ref{eq:7.13}) implies $u  = v$.  We then substitute into (\ref{eq:7.9}) to get $(a + b) x = (c + d) u $.  Recalling 
  that $a+b=c+d \neq 0$, we may cancel to get $x = u$, which then implies 
  $x=y=u=v$.  Since $e(w_1,x)$ and $e(w_2,y)$ have a vertex in common and $x = y$, Proposition \ref{prop:consecutive edges} implies these edges are the same, a contradiction.    

Now assume $x \neq y$ and $u \neq v$.  Multiplying (\ref{eq:7.13}) through by $cd$ yields 
 \begin{equation}\label{eq:quadratic abcd}
 abcd ( x - y)^2 = (cd)^2 ( u - v)^2.
 \end{equation}
Let $$\pi_{i_1,i_2,i_3,i_4}  = s t^2$$ where $s$ and $t$ are positive integers and $s$ is square free.  Note 
that
 \[
 st^2 = \coeff{2}{1} \coeff{3}{2} \coeff{4}{3} \coeff{1}{4} = abcd.  
 \]
From this equation, we obtain 
\begin{equation}\label{eq:s t squared}
st^2 ( x - y)^2 = (cd)^2 ( u -v)^2 .  
\end{equation}

Since $x-y \neq 0$ and $u - v \neq 0$, (\ref{eq:s t squared}) implies that $s$ is a quadratic residue modulo $p$.  Write $[ \sqrt{s} ]_p$ for one of the two solutions to 
$X^2 = s (\textup{mod}~p)$.  Using this notation, (\ref{eq:s t squared}) can be rewritten as 
\[
\left(  [ \sqrt{s} ]_p t (x - y ) - cd ( u -v) \right) 
\left( [ \sqrt{s} ]_p t (x - y ) + cd ( u - v) \right) = 0.
\]
Suppose first that $[ \sqrt{s} ]_p t (x - y ) - cd ( u -v) = 0$.   Multiplying 
(\ref{eq:7.9}) by $c$ and then adding  
to $[ \sqrt{s} ]_p tx - [ \sqrt{s} ]_p ty - cd u + cdv = 0$ gives 
\[
( [ \sqrt{s} ]_p t  + ca ) x + ( cb - [ \sqrt{s} ]_p t )y - c ( c + d)u = 0.
\]
This is an equation over $\mathbb{F}_p$ whose coefficients are 
$[ \sqrt{s} ]_p t  + ca$, 
$cb - [ \sqrt{s} ]_p t$, and $- c(c + d)$ where $x,y,u \in B$ form a solution to 
\begin{equation}\label{eq:bad 1}
\left(  [ \sqrt{s} ]_p t  + ca \right) X + 
\left( cb   - [ \sqrt{s} ]_p t \right) Y = c (c + d)U.
\end{equation}

Now suppose that $ [ \sqrt{s} ]_p t (x - y ) + cd ( u - v)  = 0$.  Multiplying (\ref{eq:7.9}) by $d$ and adding to $[ \sqrt{s} ]_p tx - [ \sqrt{s} ]_p ty = -cdu + cdv$ gives
\[
( [ \sqrt{s} ]_p t  + da ) x + ( db - [ \sqrt{s} ]_p t )y  = d(c+d)v.
\]
Therefore, $x,y,v \in B$ form a solution to the equation 
\begin{equation}\label{eq:bad 2}
( [ \sqrt{s} ]_p t  + da ) X + ( db - [ \sqrt{s} ]_p t )Y  = d(c+d)V.
\end{equation}

\bigskip
\noindent
\textbf{Subcase 1:} $s = 1$

\medskip
  If $s = 1$, then $abcd = t^2$ where $t$ is a positive integer with $1 \leq t \leq \lambda_r^2$.  Either the elements $x,y,u$ or the elements $x,y,v$ form a solution to the 
 type $(2,1)$ equation 
 \[ (t + ca ) X + ( cb - t)Y = c ( c + d)  U ~~~ \mbox{or} ~~~ 
(  t  + da ) X + ( db - t )Y  = d(c+d)V,
 \]
respectively.  The coefficients in both of these equations have absolute value at most $2 \lambda_r^2$.  Hence, we can apply Lemma \ref{lemma:multiple Behrend} to get either $x = y = u$ in the first case, or $x=y=v$ in the second.  This is a contradiction since we have assumed that $x \neq y$.       
 
\bigskip
\noindent
\textbf{Subcase 2:} $s > 1$

\medskip

Recalling that $st^2 = \coeff{2}{1} \coeff{3}{2} \coeff{4}{3} \coeff{1}{4}$, we see
that $s$ is the square-free part of a product of differences of elements of $\vec{ \lambda}$.  
By definition, $s \in \mathcal{S} ( \vec{ \lambda})$ and in this case, 
$\mathcal{S} ( \vec{ \lambda}) = \{ 3 \cdot 7 , 3 \cdot 17 \}$.  
Each prime $p$ has been chosen so that 3 is a non-quadratic reside, while 7 and 17 are quadratic residues.  Therefore, $3 \cdot 7$ and $ 3 \cdot 17$ are not quadratic residues modulo $p$. 
Since $s$ not a quadratic residue modulo $p$, we have a contradiction with 
(\ref{eq:s t squared}).

 
 \begin{remark}\label{remark: subtlety}
     If we do not assume that $\vec{\lambda}$ has the square products property, then it may be the case that $s\not=1$ and $s$ is a quadratic residue in $\mathbb{F}_p$. In this case we still obtain the pair of equations (\ref{eq:bad 1}) and (\ref{eq:bad 2}). Both are type 
     $(2,1)$ equations if considered as equations over the integers, but the coefficients will be a function of the square root of $s$ in $\mathbb{F}_p$.  Now unlike sums and products of the elements of $\vec{ \lambda}$, 
     the square root of $s$ modulo $p$ may not be bounded by a constant that is independent of the $p$ (and indeed in this case the square root considered as an integer must be at least $\sqrt{p}$).  Thus, following this particular approach we cannot apply Lemma \ref{lemma:multiple Behrend} because we do not have a uniform bound on all of the coefficients that appear in 3 variable equations that come from reducing the 4 variable systems of 2 equations coming from 4-cycles.
 \end{remark}
\medskip

\noindent
\textbf{Case 2:} $\coeffnp{2}{1} > 0$, $\coeffnp{4}{3} > 0$

\smallskip

Let $a = \coeffnp{2}{1}$, $b = \coeffnp{4}{3}$, $c = -  \coeff{3}{2} $, and $d  = -  \coeff{1}{4} $ so 
$a,b,c,d$ are positive integers and $a  + b = d + c$.  Equations (\ref{eq:first 
 coordinate}) and (\ref{eq:second coordinate}) are now 
 \begin{equation}\label{eq:7.9 Case 2}
 a x + bu = c y + dv
 \end{equation}
 and 
 \begin{equation}\label{eq:7.10 Case 2}
 ax^2 + bu^2 = cy^2 + dv^2.
 \end{equation}
As in Case 1, we can obtain
 \begin{equation}\label{eq:7.13 Case 2}
 ab ( x- u )^2 = cd ( y - v)^2.
 \end{equation}
If $x = u$, then (\ref{eq:7.13 Case 2}) implies $y =v$.  Substituting into (\ref{eq:7.10 Case 2}) 
gives $(a+b)x = (c + d )y$ so that $x = y$.  This is a contradiction since $x = y$ implies that 
$e(w_1,x) = e(w_2,y)$.  
Assume that $x \neq u$ and $y \neq v$.  Multiplying (\ref{eq:7.13 Case 2}) through by $cd$ yields 
 \[
 abcd ( x - u)^2 = (cd)^2 ( y - v)^2.
 \]
From this point forward, the argument is similar to that of Case 1 starting with equation 
(\ref{eq:quadratic abcd}).  One writes $abcd$ as $st^2$ where $s$ and $t$ are integers and
$s$ is square free to get 
 \[
 s t^2 ( x- u )^2 = (cd)^2 ( y - v)^2.
 \]
 Since $x \neq u$ and $y \neq v$, this equation implies that $s$ is a quadratic residue modulo $p$ and we may write 
\[
\left(  [ \sqrt{s} ]_p t (x - u ) - cd ( y -v) \right) 
\left( [ \sqrt{s} ]_p t (x - u ) + cd ( y - v) \right) = 0.
\]
The remaining steps are similar to Case 1, and Case 2 leads to another contradiction.


\subsection{Proof of Theorem \ref{thm: 456}}

Let $r \in \{4,5,6 \}$.  For $r = 4,5,6$, we take $\vec{ \lambda}$ to be $(1,35,161,170)$, $(1,35,161,170,251)$, $(1,35,161,170,251,545)$, respectively.  Let $\epsilon > 0$.   
By Lemma \ref{lemma:PNT lemma} there is an $N_r$ such that 
for all $N > N_r$, there is a prime $p$ with $p \equiv 953 ( \textup{mod}~1428)$ and 
$(1 - \epsilon ) \sqrt{N/r} \leq p \leq 
\sqrt{N / r}$.   
This latter inequality can be rewritten as $(1 - \epsilon)^2 N \leq r p^2 \leq N$.  
We also choose $N$ large enough so that the lower bound 
$(1 - \epsilon ) \sqrt{N/r} \leq p$ implies that we can choose $B$ according to Section \ref{sec:4.2}: $B$ has only trivial solutions to each equation in $\mathcal{E}_{ 2 \lambda_r^2 , 3}$ and 
$|B| > \gamma p e^{ - \tau  \sqrt{ \log p} }$.  Here $\gamma$ and $\tau$ are positive constants depending only on $r$ and the entries of $\vec{ \lambda}$.  Let
$R = R' = \mathbb{F}_p^2$, $S = \mathbb{Z}$, 
\[
A  = \{ ( x,x^2) : x \in B \}
\]
and $\mathcal{H} ( A , \vec{ \lambda} )$ be the corresponding hypergraph.  
This hypergraph has $rp^2$ vertices and girth 5 by Section \ref{sec: proof of girth number theory}.  Since $(1 - \epsilon )^2 N \leq rp^2 \leq N$, $\mathcal{H}(A , \vec{ \lambda})$ has at least 
\[
p^2 |A| > \gamma p^3 e^{  - \tau \sqrt{ \log p } } \geq \frac{\gamma(1-\epsilon)^3}{r^{3/2}}N^{3/2} e^{-\tau \sqrt{\log(\sqrt{N/r})}} = N^{3/2-o(1)}
\]
edges.


\section{Coding theory bounds from graphs}\label{sec: sphere packing}


In this section we go the other direction and use graph theoretic results to imply bounds on codes, proving Theorem \ref{distance 6 code upper bound}. To prove this we use the following recent theorem of Conlon, Fox, Sudakov, and Zhao \cite{conlon}.

\begin{theorem}[\cite{conlon}, Corollary 1.10]\label{girth 6 upper bound}
    For $r\geq 3$, every $r$-uniform hypergraph on $N$ vertices with girth $6$ has $o(N^{3/2})$ edges.
\end{theorem}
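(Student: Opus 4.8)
The plan is to reduce this hypergraph statement to a purely graph-theoretic one about the $2$-shadow and then feed it into the sparse regularity method of \cite{conlon}. Let $\mathcal{H}$ be an $r$-uniform hypergraph on $N$ vertices with girth at least $6$. Since $\mathcal{H}$ has no Berge $2$-cycle it is linear, so any two distinct hyperedges meet in at most one vertex. Form the $2$-shadow $G$ on the same vertex set, putting $uv\in E(G)$ precisely when some hyperedge contains both $u$ and $v$; by linearity this hyperedge is unique when it exists, each hyperedge contributes $\binom{r}{2}$ distinct edges to $G$, and distinct hyperedges contribute disjoint edge sets. Hence $e(G)=\binom{r}{2}\,e(\mathcal{H})$, and it suffices to show $e(G)=o(N^{3/2})$.

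The key structural input is: for $k\in\{4,5\}$, every copy of $C_k$ in $G$ has all its vertices inside a single hyperedge of $\mathcal{H}$. I would prove this by a short case analysis. Given a $C_k$ with vertices $v_1,\dots,v_k$, each edge $v_iv_{i+1}$ lies in a unique hyperedge $E_i$. If the $E_i$ are pairwise distinct we obtain a Berge $k$-cycle, contradicting girth $\ge 6$. Otherwise some $E_i=E_j$, which forces several of the $v_\ell$ into a common hyperedge; chasing the remaining edges through their hyperedges then produces either a Berge triangle or a Berge $4$-cycle (using girth $\ge 6$ again), or, via linearity, collapses all of $v_1,\dots,v_k$ into one hyperedge. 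The upshot is that the only copies of $C_4$ in $G$ are the $3\binom{r}{4}$ copies living in the clique $\binom{E}{2}$ of each hyperedge $E$, and each is counted once since two hyperedges share at most one vertex. Thus $G$ is a graph on $N$ vertices with $m:=e(G)$ edges and only $O_r(m)$ copies of $C_4$, and its copies of $C_5$ are confined in the same way. When $r=3$ this says exactly that $G$ is $\{C_4,C_5\}$-free and decomposes into edge-disjoint triangles.

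The remaining, and genuinely hard, step is to conclude that such a $G$ has $m=o(N^{3/2})$, which is precisely the content provided by the regularity method of \cite{conlon} for graphs with few $4$-cycles. At a high level, one applies the sparse regularity and counting machinery to $G$: if $m$ were $\Omega(N^{3/2})$, then having so few $4$-cycles forces $G$ to be quasirandom at the relevant scale, which in turn produces many copies of $C_5$ (when $r=3$) or many copies of $C_5$ that are not confined to a single hyperedge (for general $r$), contradicting the structural lemma. Making this rigorous is the crux, and the $o(\cdot)$ rate is exactly what the method delivers.

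I expect this last step to be the main obstacle, and it is worth stressing why no purely elementary reduction can replace it. Girth $\ge 5$ alone does not suffice: by Lazebnik and Verstra\"{e}te \cite{LV}, $\mathrm{ex}_3(N,\mathcal{C}_{<5})\sim \tfrac16 N^{3/2}$, and there the $2$-shadow is an honest $C_4$-free, triangle-decomposable graph with $\Theta(N^{3/2})$ edges. Being $\{C_4,C_5\}$-free alone is also not enough, since incidence graphs of projective planes are bipartite, hence $C_5$-free, $C_4$-free, and have $\Theta(N^{3/2})$ edges. It is only the interaction of the no-Berge-$5$-cycle condition with the triangle- (or $K_r$-) decomposition of the shadow that forces the edge count below $\Theta(N^{3/2})$, and extracting $o(N^{3/2})$ from that interaction is exactly what the regularity method of \cite{conlon} is built to do.
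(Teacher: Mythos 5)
The paper does not give a proof of this statement. It is quoted verbatim as Corollary~1.10 of Conlon, Fox, Sudakov, and Zhao \cite{conlon} and used as a black box in the proof of Theorem~\ref{distance 6 code upper bound}; there is no internal argument in the paper to compare against. The theorem is a citation, not a result the authors establish.

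With that caveat, your sketch is a reasonable outline of the internals of the cited result, and the reduction to the 2-shadow $G$ is sound: linearity makes the hyperedges edge-disjoint copies of $K_r$ with $e(G)=\binom{r}{2}e(\mathcal{H})$, and the structural claim that every $C_4$ and $C_5$ in $G$ lies inside a single hyperedge-clique does follow from a case analysis (if two of the covering hyperedges coincide, linearity together with the absence of Berge $3$-, $4$-, and $5$-cycles either collapses all cycle vertices into one hyperedge or produces a short Berge cycle). But you have left a genuine gap at exactly the step you yourself flag as the crux: you invoke ``the sparse regularity and counting machinery of \cite{conlon}'' to pass from ``$O_r(m)$ copies of $C_4$, all confined to cliques'' to $m=o(N^{3/2})$ without identifying a specific theorem of \cite{conlon} and verifying that its hypotheses are met by your $G$ (e.g., whether it requires a $C_4$-free graph, $O(N^2)$ copies of $C_4$, or something matching the $O_r(m)$ bound you actually have). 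Since the paper cites \cite{conlon} for exactly this conclusion, your write-up is at best an informal, unverified reconstruction of the shape of the cited proof, and at worst it is circular, because the statement you reduce to is the thing being cited. Either extract and rigorously apply the precise graph-level theorem from \cite{conlon}, or state the result as a citation as the paper does.
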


\begin{proof}[Proof of Theorem \ref{distance 6 code upper bound}]
    Let $\mathcal{C}$ be an $[n,k,6]_q$ code and let $H$ be a parity check matrix for it. Let $A$ be the set of the columns of $H$, $A\subset \mathbb{F}_q^{n-k}$, and let $\vec{\lambda}=\{\lambda_1 ,\lambda_2, \lambda_3\}\subset \mathbb{F}_q$ be a Sidon set of size $3$, noting that one exists because $q\geq 7$ and is odd. Define the graph $G=\mathcal{H}(A,\vec{\lambda})$. This is a graph on $N = 3\cdot q^{n-k}$ vertices with $|A|q^{n-k} = n q^{n-k}$ edges. By Section \ref{sec: coding theory girth}, $G$ has girth $6$, and so by Theorem \ref{girth 6 upper bound} it must have $o(N^{3/2})$ edges. Therefore, we have that 
    $nq^{n-k} = o\left( 3^{3/2}q^{\frac{3}{2}(n-k)}\right)$
    or equivalently that 
    \[
    \lim_{n\to \infty } \frac{q^{\frac{1}{2}(n-k)}}{n} = 0.
    \]
    Therefore $q^{\frac{1}{2}(n-k)} = \omega(1)n$ for some function $\omega(1)$ that tends to infinity with $n$. Taking log of both sides and rearranging gives the result. 
\end{proof}

\section{Conclusion}\label{sec:conclusion}

{ In this paper, we used tools from coding theory and number theory to construct Ruzsa-Szemer\'{e}di hypergraphs of girth 5.  These families of girth 5 hypergraphs prove Theorems \ref{thm: 456} and \ref{thm: graphs from codes}.  Now in \cite{TV} it was claimed that one can use Ruzsa's method for genus 2 equations to prove a $c_rN^{3/2 - o (1) }$ lower bound on $\textrm{ex}_r(N , \mathcal{C}_{g  < 5} )$ for all $r \geq 4$.  In an attempt to follow through on this claim, the problematic ``unbounded coefficient" equations 
(\ref{eq:bad 1}) and (\ref{eq:bad 2}) appear and this causes a significant gap in the purported method of proof.  We showed, however, that if one can find a Sidon set $\{\lambda_1,\cdots, \lambda_r\}$ with the square products property and that there is a dense enough sequence of primes certifying the property, then the claimed $N^{3/2-o(1)}$ bound follows. Furthermore, we constructed such sets when $r\in \{4,5,6\}$. We do not know if such sets do or do not exist for any other $r$.  It is also possible that a different modification of Rusza's method exists, and if so, it could have applications to coding theory.  We conclude this paragraph by commenting that our introduction contains several references which cite or use the claimed lower bound from \cite{TV}.}  



We also note that it is possible to extend Lemma \ref{lemma:multiple Behrend} to allow the coefficients to grow as long as they do not grow too fast, see for example Lemmas 3.4 and 3.5 in \cite{Ge} where equations are avoided over $[n]$ and coefficients are allowed to be $n^{o(1)}$. It is possible that allowing one to choose a growing sequence of $\vec{\lambda}$ which has some dependence on $N$ might be easier for some reason than choosing a fixed $\vec{\lambda}$ with the square products property. 

Alternatively, we also showed that one may push the Ruzsa method through for all $r$ if an equivalent of Lemma \ref{lemma:multiple Behrend} over $\mathbb{F}_p$ and without any restriction on the size of the coefficients could be found. We therefore state the following open problem; a positive resolution to it would give that $\mathrm{ex}_r(n, \mathcal{C}_{<5}) = N^{3/2-o(1)}$ for any $r$.

\begin{question}\label{behrend over field question}
Determine whether or not for any $\epsilon > 0$ there exists a $P$ such that for all primes $p \geq P$ and $a,b\in \mathbb{F}_p$, there exists a set $A\subset \mathbb{F}_p$ with $|A| > p^{1-\epsilon}$ and no nontrivial solutions to the equation $aX + bY = (a+b)Z$.
\end{question}

We believe that Question \ref{behrend over field question} is interesting in its own right. If $a$ and $b$ are of bounded ``height", then one can work in the first $\epsilon\cdot p$ elements of the field and pretend to work in the integers. 
A Behrend-type construction then gives a $p^{1-o(1)}$ lower bound. On the other side, a standard adaptation of the triangle removal lemma argument in the upper bound for the corners theorem gives an $o(p)$ upper bound for any $a,b$.

For all $r$ and $k$ one has $\mathrm{ex}_r(N, \mathcal{C}_{2k}) = O(N^{1+1/k})$ and it is a notorious open problem to determine whether there is a corresponding lower bound in almost all cases. However, when $r=2$ Conlon \cite{conlon2} showed that for any $k$ there exists a $c_k$ such that there exist $N$ vertex graphs with $\Omega(N^{1+1/k})$ edges and at most $c_k$ paths of length $k$ between any pair of vertices. The same result was shown for arbitrary $r$ in \cite{sunny}. Therefore, we may define the function $f(r,k)$ to be the minimum $c$ such that there exist $N$ vertex $r$-uniform hypergraphs with $\Omega(N^{1+1/k})$ edges and at most $c$ Berge paths of length $k$ between any pair of vertices. Understanding this function when $r=2$ was asked in \cite{conlon3}. The results in \cite{conlon2} and \cite{sunny} give that $f(r,k)$ is finite for any $r$ and $k$ but do not give any explicit bounds. In these papers, the random polynomial method was used and the best bound that could be extracted from them would give $f(r,k) = k^{O(k^2)}$ (the papers do not actually prove this bound). In \cite{ge2, ge3, ge1}, explicit bounds were given for $f(r,k)$ when $r\in \{2,3\}$ and $k=3$. 

In Section \ref{section: longer cycles}, we showed that one always sees trivial cycles of length $6$ in $\mathcal{H}(A,\vec{\lambda})$. However, if one could avoid {\em nontrivial} solutions to \eqref{a equation} when $k=6$, then the graph would have only a constant (depending only on $r$) number of paths of length $3$ between any pair of vertices. If the set avoiding nontrivial solutions to \eqref{a equation} had size $\Omega(|R|^{1/3})$ this would then give explicit upper bounds on $f(r,3)$ and would give lower bounds on some Tur\'an numbers for Berge theta graphs. It would be interesting to find such a set, though it might be hard as evidenced by the difficulty of finding $k$-fold Sidon sets, a problem which was proposed in \cite{LV} but is still open. It is possible that a smart choice of $\vec{\lambda}$ could make it easier. For longer cycles, one not only sees them but also sees unbounded numbers of paths between pairs of vertices: for $\ell \geq 4$ there will be pairs of vertices with $\Omega(|A|)$ paths of length $\ell$ between them even if $A$ avoids nontrivial solutions to \eqref{a equation}.



 \section*{Acknowledgements}
The second and third authors would like to thank Boris Bukh and Jacques Verstra\"ete for several helpful discussions on this problem over the last decade.


\bibliographystyle{plain}
	\bibliography{references.bib}

\begin{thebibliography}{10}

\bibitem{alonDuke}
N.~Alon, R.~A. Duke, H.~Lefmann, V.~R\"{o}dl, and R.~Yuster.
\newblock The algorithmic aspects of the regularity lemma.
\newblock {\em Proceedings of the 33rd IEEE FOCS at Pittsburgh, 1992},
  16:80--109, 1994.

\bibitem{alon}
Noga Alon.
\newblock Testing subgraphs in large graphs.
\newblock {\em Random Structurs and Algorithms}, 21(3-4):359--370, 2002.

\bibitem{alonFischerSzegedy}
Noga Alon, Eldar Fischer, and Mario Szegedy.
\newblock Parent-identifying codes.
\newblock {\em J.\ Combin.\ Theory, Series A}, 95:349--369, 2001.

\bibitem{Bennett}
M.~Bennett, G.~Martin, K.~O'Bryant, and A.~Rechnitzer.
\newblock Explicit bounds for primes in arithmetic progressions.
\newblock {\em Illinois J. Math.}, 62(1--4):427--532, 2018.

\bibitem{conlon2}
David Conlon.
\newblock Graphs with few paths of prescribed length between any two vertices.
\newblock {\em Bulletin of the London Mathematical Society}, 51(6):1015--1021,
  2019.

\bibitem{conlon3}
David Conlon.
\newblock Extremal numbers of cycles revisited.
\newblock {\em The American Mathematical Monthly}, 128(5):464--466, 2021.

\bibitem{conlon}
David Conlon, Jacob Fox, Benny Sudakov, and Yufei Zhao.
\newblock The regularity method for graphs with few 4-cycles.
\newblock {\em Journal of the London Mathematical Society}, 104(5):2376--2401,
  2021.

\bibitem{David}
David Davini.
\newblock Personal communication.
\newblock {\em 2023}.

\bibitem{DLR}
Richard~A Duke, Hanno Lefmann, and Vojtech R{\"o}dl.
\newblock On uncrowded hypergraphs.
\newblock {\em Random Structures \& Algorithms}, 6(2-3):209--212, 1995.

\bibitem{dumerdouble}
Ilya Dumer.
\newblock Nonbinary double-error-correcting codes designed by means of
  algebraic varieties.
\newblock {\em IEEE Transactions on Information Theory}, 41(6):1657--1666,
  1995.

\bibitem{Ge-Hash}
Gennian Ge and Shangguan Chong.
\newblock Separating hash families: A {J}ohnson-type bound and new
  constructions.
\newblock {\em SIAM J.\ Discrete Math.}, 30(4):2243--2264, 2016.

\bibitem{Ge}
Gennian Ge and Shangguan Chong.
\newblock Sparse hypergraphs: new bounds and constructions.
\newblock {\em J.\ Combin.\ Theory, Series B}, 147:96--132, 2021.

\bibitem{GerbnerPatkos}
D\'{a}niel Gerbner and Bal\'{a}sz Patk\'{o}s.
\newblock Generalized {T}ur\'{a}n problems for complete bipartite graphs.
\newblock {\em Graphs and Combinatorics}, 38(164), 2022.

\bibitem{GL}
Ervin Gy{\H{o}}ri and Nathan Lemons.
\newblock Hypergraphs with no cycle of a given length.
\newblock {\em Combinatorics, Probability and Computing}, 21(1-2):193--201,
  2012.

\bibitem{sunny}
Zhiyang He and Michael Tait.
\newblock Hypergraphs with few {B}erge paths of fixed length between vertices.
\newblock {\em SIAM Journal on Discrete Mathematics}, 33(3):1472--1481, 2019.

\bibitem{huffman2010fundamentals}
W~Cary Huffman and Vera Pless.
\newblock {\em Fundamentals of error-correcting codes}.
\newblock Cambridge university press, 2010.

\bibitem{johnson1962new}
Selmer Johnson.
\newblock A new upper bound for error-correcting codes.
\newblock {\em IRE Transactions on Information Theory}, 8(3):203--207, 1962.

\bibitem{LV}
Felix Lazebnik and Jacques Verstra{\"e}te.
\newblock On hypergraphs of girth five.
\newblock {\em The Electronic Journal of Combinatorics}, 10(1):R25, 2003.

\bibitem{macwilliams1977theory}
Florence~Jessie MacWilliams and Neil James~Alexander Sloane.
\newblock {\em The theory of error-correcting codes}, volume~16.
\newblock Elsevier, 1977.

\bibitem{MT}
Masato Mimura and Norihide Tokushige.
\newblock Solving linear equations in a vector space over a finite field.
\newblock {\em Discrete Mathematics}, 344(12):112603, 2021.

\bibitem{PachVizer}
P\'{e}ter~P\'{a}l Pach and M\'{a}t\'{e} Vizer.
\newblock Improved lower bounds for multiplicative square-free sequences.
\newblock {\em The Electronic Journal of Combinatorics}, 30(4):P4.31, 2023.

\bibitem{roth2006}
Ron~M. Roth.
\newblock {\em Introduction to coding theory}.
\newblock Cambridge University Press, 2006.

\bibitem{Ruzsa}
Imre~Z. Ruzsa.
\newblock Solving a linear equation in a set of integers. {I}.
\newblock {\em Acta Arith.}, 65(3):259--282, 1993.

\bibitem{RS}
Imre~Z Ruzsa and Endre Szemer{\'e}di.
\newblock Triple systems with no six points carrying three triangles.
\newblock {\em Combinatorics (Keszthely, 1976), Coll. Math. Soc. J. Bolyai},
  18(939-945):2, 1978.

\bibitem{ShangguanTamo1}
Chong Shangguan and Itzhak Tamo.
\newblock Sparse hypergraphs with applications to coding theory.
\newblock {\em SIAM J.\ Discrete Math.}, 34(3):1493--1504, 2020.

\bibitem{Spiro3}
Sam Spiro and Jacques Verstra{\"e}te.
\newblock Relative {T}ur{\'a}n numbers for hypergraph cycles.
\newblock {\em Discrete Mathematics}, 344(10):112542, 2021.

\bibitem{Spiro1}
Sam Spiro and Jacques Verstra\"{e}te.
\newblock Relative {T}urán problems for uniform hypergraphs.
\newblock {\em SIAM Journal on Discrete Mathematics}, 35(3):2170--2191, 2021.

\bibitem{Spiro2}
Sam Spiro and Jacques Verstraëte.
\newblock Counting hypergraphs with large girth.
\newblock {\em Journal of Graph Theory}, 100(3):543--558, 2022.

\bibitem{szemeredi}
Endre Szemer\'{e}di.
\newblock Regular partitions of graphs.
\newblock {\em Proc.\ Colloque Inter.\ CNRS}, 260:399--401, 1978.

\bibitem{TimmonsThesis}
Craig Timmons.
\newblock {\em Extremal Graphs and Additive Combinatorics}.
\newblock Phd thesis, University of California, San Diego, 2014.

\bibitem{TV}
Craig Timmons and Jacques Verstra{\"e}te.
\newblock A counterexample to sparse removal.
\newblock {\em European journal of combinatorics}, 44:77--86, 2015.

\bibitem{Wang}
Xin Wang.
\newblock A new construction for constant-composition codes.
\newblock {\em IEEE Transactions on Information Theory}, 67(12):7815--7821,
  2021.

\bibitem{WWS}
Hengjia Wei, Xin Wang, and Moshe Schwartz.
\newblock On lattice packings and coverings of asymmetric limited-magnitude
  balls.
\newblock {\em IEEE Transactions on Information Theory}, 67(8):5104--5115,
  2021.

\bibitem{XingYuan}
Chaoping Xing and Chen Yuan.
\newblock Construction of optimal locally recoverable codes and connection with
  hypergraph.
\newblock {\em Leibniz International Proceedings in Informatics, 46th ICALP
  2019}, 46th ICALP(98):1--13, 2019.

\bibitem{XingYuan2}
Chaoping Xing and Chen Yuan.
\newblock Construction of optimal $(r , \delta)$-locally recoverable codes and
  connection with graph theory.
\newblock {\em IEEE Transactions on Information Theory}, 68(7):4320--4328,
  2022.

\bibitem{ge2}
Tao Zhang, Zixiang Xu, and Gennian Ge.
\newblock 3-uniform hypergraphs with few {B}erge paths of length three between
  any two vertices.
\newblock {\em arXiv preprint arXiv:1908.01459}, 2019.

\bibitem{ge3}
Tao Zhang, Zixiang Xu, and Gennian Ge.
\newblock On the {T}ur\'an number of 1-subdivision of $ \mbox{$K$}_{3,t}$.
\newblock {\em arXiv preprint arXiv:2002.06795}, 2020.

\bibitem{ge1}
Tao Zhang, Zixiang Xu, and Gennian Ge.
\newblock A polynomial resultant approach to algebraic construction of extremal
  graphs.
\newblock {\em arXiv preprint arXiv:2109.15148}, 2021.

\end{thebibliography}
 
\section{Appendix} 

In order to confirm that the Sidon set $\{1,35,161,170,251,545 \}$ has the square products property, one must first consider all possible products of the form 
\[
( \lambda_{i_2} - \lambda_{i_1} ) ( \lambda_{i_3}  - \lambda_{i_2} ) ( \lambda_{i_4} - \lambda_{i_3} ) ( \lambda_{i_1} - \lambda_{i_4} )
\]
where $i_1,i_2,i_3,i_4$ are distinct indices in $\{1,2,3,4,5,6\}$ and $i_1 \neq i_2$, $i_2 \neq i_3$, $i_3 \neq i_4$, $i_4 \neq i_1$.  There are $\binom{6}{4}$ = 15 choices for the indices and then three ways to order the indices.  Indeed, if $1 \leq j_1 < j_2 < j_3 < j_4 \leq 6$ are the chosen indices, then the product 
\[
( \lambda_{j_2} - \lambda_{j_1} ) ( \lambda_{j_3}  - \lambda_{j_2} ) ( \lambda_{j_4} - \lambda_{j_3} ) ( \lambda_{j_1} - \lambda_{j_4} )
\]
will be negative.  The other two orderings lead to the positive products
\[
( \lambda_{j_2} - \lambda_{j_1} ) ( \lambda_{j_4}  - \lambda_{j_2} ) ( \lambda_{j_3} - \lambda_{j_4} ) ( \lambda_{j_1} - \lambda_{j_3} )
\]
and
\[
( \lambda_{j_3} - \lambda_{j_1} ) ( \lambda_{j_2}  - \lambda_{j_3} ) ( \lambda_{j_4} - \lambda_{j_2} ) ( \lambda_{j_1} - \lambda_{j_4} )
\]
Each of these positive products must be factored and then the non-square part must be put into the set $\mathcal{S}:=\mathcal{S}( ( 1,35,161,170,251,545 ) )$.  Each row in the table below lists the chosen indices, the two positive products coming from the different orderings, and finally the element(s) that must be placed in $\mathcal{S}$.   
\begin{footnotesize}
\begin{center}
\begin{tabular}{ |c  | c | c | c  | c|} \hline
$\{1,2,3,4 \}$ & $6609600 = 2^6\cdot 3^5\cdot 5^2\cdot 17^1$ & 
$459950400 =2^6\cdot 3^5\cdot 5^2\cdot 7^1\cdot 13^2$ & $3 \cdot 17$ and $3 \cdot 7$ \\ \hline
$\{1,2,3,5 \}$ & $105753600 = 2^{10}\cdot 3^5\cdot 5^2\cdot 17^1$ & 
$1088640000 = 2^{10}\cdot 3^5\cdot 5^4\cdot 7^1 $ & $3 \cdot 17$ and $3 \cdot 7$ \\ \hline
$\{1,2,3,6 \}$ & $1065369600 = 2^{14}\cdot 3^2\cdot 5^2\cdot 17^2$ & 
$5593190400 = 2^{12}\cdot 3^3\cdot 5^2\cdot 7^1\cdot 17^2$ & $3 \cdot 17$  \\ \hline
$\{1,2,4,5 \}$ & $100532016 = 2^4\cdot 3^7\cdot 13^2\cdot 17^1$ & 
$1232010000 = 2^4\cdot 3^6\cdot 5^4\cdot 13^2$ & $3 \cdot 17$  \\ \hline
$\{1,2,4,6 \}$ & $1098922500 = 2^2\cdot 3^2\cdot 5^4\cdot 13^2\cdot 17^2$ & 
$6329793600 = 2^6\cdot 3^4\cdot 5^2\cdot 13^2\cdot 17^2$ & None  \\ \hline
$\{1,2,5,6 \}$ & $1274490000 = 2^4\cdot 3^2\cdot 5^4\cdot 7^2\cdot 17^2$ & 
$14981760000 = 2^{10}\cdot 3^4\cdot 5^4\cdot 17^2$ & None  \\ \hline
$\{1,3,4,5 \}$ & $197121600 = 2^6\cdot 3^6\cdot 5^2\cdot 13^2$ & 
$34222500 = 2^2\cdot 3^4\cdot 5^4\cdot 13^2$ & None  \\ \hline
$\{1,3,4,6 \}$ & $3893760000 = 2^{12}\cdot 3^2\cdot 5^4\cdot 13^2$ & 
$317730816 = 2^{12}\cdot 3^3\cdot 13^2\cdot 17^1$ & $3 \cdot 17$  \\ \hline
$\{1,3,5,6 \}$ & $4515840000 = 2^{14}\cdot 3^2\cdot 5^4\cdot 7^2$ & 
$4700160000 = 2^{14}\cdot 3^3\cdot 5^4\cdot 17^1$ & $3 \cdot 17$  \\ \hline
$\{1,4,5,6 \}$ & $4658062500 = 2^2\cdot 3^2\cdot 5^6\cdot 7^2\cdot 13^2$ & 
$4131000000 = 2^6\cdot 3^5\cdot 5^6\cdot 17^1$ & $3 \cdot 17$  \\ \hline
$\{2,3,4,5 \}$ & $124002900 = 2^2\cdot 3^{11}\cdot 5^2\cdot 7^1$ & 
$23619600 = 2^4\cdot 3^{10}\cdot 5^2$ & $3 \cdot 7$  \\ \hline
$\{2,3,4,6 \}$ & $2449440000 = 2^8\cdot 3^7\cdot 5^4\cdot 7^1$ & 
$237945600 = 2^8\cdot 3^7\cdot 5^2\cdot 17^1$ & $3 \cdot 7$ and $3 \cdot 17$  \\ \hline
$\{2,3,5,6 \}$ & $3072577536 = 2^{12}\cdot 3^7\cdot 7^3$ & 
$3807129600 =2^{12}\cdot 3^7\cdot 5^2\cdot 17^1$ & $3 \cdot 7$ and $3 \cdot 17$  \\ \hline
$\{2,4,5,6 \}$ & $3214890000 = 2^4\cdot 3^8\cdot 5^4\cdot 7^2$ & 
$3346110000 = 2^4\cdot 3^9\cdot 5^4\cdot 17^1$ & $3 \cdot 17$  \\ \hline
$\{3,4,5,6 \}$ & $89302500 =2^2\cdot 3^6\cdot 5^4\cdot 7^2$ & 
$1049760000 = 2^8\cdot 3^8\cdot 5^4$ & None  \\ \hline
\end{tabular}
\end{center}
\end{footnotesize}

\end{document}